\newtheorem{theorem}{Theorem}[section]
\newtheorem{lemma}[theorem]{Lemma}
\newtheorem{problem}[theorem]{Problem}
\newtheorem{proposition}[theorem]{Proposition}
\newtheorem{conjecture}[theorem]{Conjecture}
\newtheorem{definition}[theorem]{Definition}
\tikzset{>=latex}
\tikzstyle{vertex}=[circle, draw, inner sep=1pt, minimum size=12pt]
\newcommand{\vertex}{\node[vertex]}
\tikzstyle{minivertex}=[circle, draw, inner sep=0pt, minimum size=3pt]
\newcounter{claim}
\newenvironment{proof}[1][]%
 {\noindent {\setcounter{claim}{0}\sc proof ---
   }{#1}{}}{\hfill$\Box$\vspace{2ex}}
\tikzset{every node/.style={circle,draw=black!80, line width=0.6mm, inner sep=0mm, minimum size=2.5mm}}
\newcommand{\F}{Forb_{ind}}
\newcommand{\ova}{\overrightarrow}
\newcommand{\ovlra}{\overleftrightarrow}
\newcommand{\ra}{\rightarrow}
\newcommand{\dic}{\vec \chi}
\newcommand{\sm}{\setminus}
\newcommand{\mc}{\mathcal}
\newcommand{\overbar}[1]{\mkern 1.7mu\overline{\mkern-1.7mu#1\mkern-1.7mu}\mkern 1.7mu}
\newcommand{\C}[1]{\ova{C_{#1}}}
\newcommand{\Pb}[1]{{\bf P#1}}
\def\Gyar{Gy\'arf\'as~}
\begin{document}
\title{Extension of Gy\'arf\'as-Sumner conjecture to digraphs}

\author[1]{Pierre Aboulker}
\author[2]{Pierre Charbit} 
\author[2]{Reza Naserasr}
	
\affil[1]{DIENS, \'Ecole normale sup\'erieure, CNRS, PSL University, Paris, France.}
\affil[2]{Université de Paris, CNRS, IRIF, F-75006, Paris, France. E-mail addresses: \{reza,charbit\}@irif.fr}

\maketitle

\begin{abstract}

The dichromatic number of a digraph $D$ is the minimum number of colors needed to color its vertices  in such a way that each color class induces an acyclic digraph. As it generalizes the notion of the chromatic number of graphs, it has been a recent center of study.
In this work we look at possible extensions of Gy\'arf\'as-Sumner conjecture. 
More precisely, we propose as a conjecture a simple characterization of finite sets $\mathcal F$ of digraphs such that every oriented graph with sufficiently large dichromatic number must contain a member of $\mathcal F$ as an induce subdigraph. 

Among notable results, we prove that oriented triangle-free graphs without a directed path of length $3$ are $2$-colorable. If condition of ``triangle-free'' is replaced with ``$K_4$-free'', then we have an upper bound of $414$. We also show that an orientation of complete multipartite graph with no directed triangle is 2-colorable. To prove these results we introduce the notion of \emph{nice sets} that might be of independent interest.

{\bf Keywords: dichromatic number, hereditary classes of digraphs} 

\end{abstract}


\section{Introduction}

Despite the fact that the chromatic number of graphs is arguably the most studied invariant in graph theory, there are still many questions about chromatic number for which we do not have today a satisfying answer. In particular, a lot of work has been done about the following interrogation : what induced substructures are expected to be found inside a graph if we assume it has very large chromatic number? Or equivalently what are the minimal families $\mc F$ such that the class of graphs that do not contain any graph in $\mc F$ as an induced subgraph has bounded chromatic number? Since cliques have unbounded chromatic number and do not contain any induced subgraph other than cliques themselves, it is clear that such an $\mc F$ must contain a clique. On the other hand, Erd\H os's celebrated result on the existence of graphs of high girth and high chromatic number \cite{Erd59} implies that if $\mc F$ is finite, then at least one member of $\mc F$ must be a forest. These two facts constitute the ``only if" part of the following tantalizing and still widely open conjecture of Gy\'arf\'as and Sumner (see~\cite{SS20} for a survey on known results). 

\begin{conjecture}[Gy\'arf\'as-Sumner, \cite{Gya87, Sum81}]
 Given two graphs $F_1$ and $F_2$ the class of graphs with no induced $F_1$ or $F_2$ has bounded chromatic number if and only if one of $F_1,F_2$ is a complete graph and the other is a forest.
\end{conjecture}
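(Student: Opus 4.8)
The plan is to treat the two directions separately, since only the ``only if'' direction is actually within reach by elementary means; the ``if'' direction is the famous open content of the conjecture, and I will indicate below why it is the genuine obstacle.

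For the \emph{only if} direction, I would assume the class $\mathcal C$ of graphs with no induced $F_1$ or $F_2$ has chromatic number bounded by some constant $c$, and derive the two structural conclusions. First, one of $F_1,F_2$ must be a complete graph: the family of complete graphs $\{K_n\}$ has unbounded chromatic number, and the only graphs occurring as induced subgraphs of any $K_n$ are themselves complete. Hence if neither $F_1$ nor $F_2$ were complete, every $K_n$ would lie in $\mathcal C$, contradicting the bound $c$; so, relabelling if necessary, $F_1$ is complete. Second, one of $F_1,F_2$ must be a forest, and here I would invoke Erd\H os's theorem that there exist graphs of arbitrarily large girth and arbitrarily large chromatic number. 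The key observation is that if a graph $F$ contains a cycle, then any graph whose girth exceeds $|V(F)|$ contains no copy of $F$ at all, let alone an induced one. So if neither $F_1$ nor $F_2$ were a forest, I would take a graph $G$ of girth larger than $\max(|V(F_1)|,|V(F_2)|)$ and chromatic number exceeding $c$; such a $G$ belongs to $\mathcal C$, again a contradiction. Combining the two facts, $\{F_1,F_2\}$ contains both a complete graph and a forest, the only degenerate overlap being the trivial cases $K_1,K_2$, which are simultaneously complete and forests and make $\mathcal C$ edgeless anyway.

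For the \emph{if} direction, the statement to prove is that forbidding a fixed complete graph $K_r$ together with a fixed forest bounds the chromatic number; forbidding $K_r$ keeps the clique number bounded, and the essential case is when the forest is a single tree $T$ (passing from forests to trees being a standard further reduction). Here I would not expect to produce a complete argument, as this is precisely the open Gy\'arf\'as--Sumner conjecture. The natural attack is induction on $|V(T)|$, peeling off a leaf: one fixes a vertex of high degree or, following Gy\'arf\'as's longest-path technique, a vertex on a maximal path, partitions the graph into distance-layers (or into the components obtained after deleting a carefully chosen separator), and tries to embed $T$ greedily one leaf at a time while a subgraph of still-large chromatic number survives in some layer.

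The main obstacle is exactly this embedding step. The longest-path/level argument succeeds only for restricted shapes of $T$ — this is how the conjecture is known for paths (Gy\'arf\'as), for trees of radius two (Kierstead--Penrice), and for a few further families — but for a general tree there is no known way to convert ``large chromatic number'' into the branching structure needed to plant an arbitrary tree as an \emph{induced} subgraph. Controlling inducedness, that is, forbidding the stray chords a greedy embedding would create, while simultaneously retaining large chromatic number in the residual subgraph, is the crux, and no technique currently handles this for all trees at once. This is why the conjecture remains open in general, and why the present paper instead develops the analogous question for digraphs.
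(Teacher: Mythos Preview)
Your treatment matches the paper's own. The paper does not attempt to prove the conjecture; it explicitly calls it ``tantalizing and still widely open'' and only sketches the \emph{only if} direction in the paragraph preceding the statement, using exactly the two ingredients you give: complete graphs (forcing one of $F_1,F_2$ to be complete) and Erd\H os's high-girth high-chromatic graphs (forcing one of $F_1,F_2$ to be a forest). Your discussion of the \emph{if} direction as the genuine open problem, and of why the known partial techniques fall short, is accurate and goes beyond what the paper says, which simply cites the survey~\cite{SS20} and moves on.
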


The goal of this paper is to study analogous questions in the setting of directed graphs.\\

 A directed graph, or in short \emph{digraph} $D=(V, A)$, is defined similarly to graph : $V$ is the set of vertices, but here $A$ is a set of {\em ordered pairs} from $V$ which are called \emph{arcs}. Thus in this work \textit{loops}, \textit{multiedges} and \textit{multiarcs} are not considered, but if $x$ and $y$ are two vertices of a digraph, both arcs $xy$ and $yx$ might exist.
 A digraph in which there is at most one arc between each pair of vertices is called an \emph{oriented graph}. A graph $G$ can be viewed as a \textit{symmetric} digraph, that is a digraph in which if $xy$ is an arc, then $yx$ is too. This digraph will be denoted by $\ovlra{G}$. Given a digraph $D=(V,A)$ the \emph{underlying} graph of $D$ is a graph on $V$ where $xy$ is an edge if either $xy$ or $yx$ is an arc of $D$. If $xy$ is an arc we say that $x$ \textit{sees} $y$. An oriented cycle in which indegree and outdegree of each vertex is 1, is a \emph{directed cycle}. 
Directed cycle of length $k$ is denoted by $\ova{C_{k}}$ and the directed cycle on $2$ vertices in sometime called a \textit{digon}. 

Observing that each edge of the unoriented graph $G$ is replaced by an oriented 2-cycle in $\ovlra{G}$, the following is a natural generalization of proper coloring of graphs introduced in 1982 by Neumann-Lara~\cite{NL82} : an \emph{acyclic coloring} of a digraph $D$ is an assignment of colors to the vertices of $D$ such that no color class contains a directed cycle. It is an easy exercise to check that proper colorings of $G$ are the same as acyclic colorings of $\ovlra{G}$.
Extending the notion of chromatic number, the \textit{acyclic chromatic number}, or simply \emph{dichromatic number}, of a digraph $D$, denoted $\dic(D)$, is defined to be the smallest number of colors required for an acyclic coloring of $D$.  
Being viewed as a natural generalization of the chromatic number to digraphs, the dichromatic number has been the center of attention recently. See  \cite{ACL19, BHL18,  HK15, H17,  KS20, LM17} for examples of recent works on this subject.

We note that some authors refer to the dichromatic number of a digraph as simply the chromatic number of a digraph. However, as we will consider the chromatic number of the underlying graph, we reserve the notation $\chi(D)$ for the chromatic number of the underlying graph of $D$. Moreover, an acyclic coloring of $D$ will be called a \emph{dicoloring} of $D$.
Observe  that for every digraph $D$, $\dic(D) \leq \chi(D)$. 
\medskip

Given a set $\mc F$ of digraphs we denote by $\F(\mc F)$  the set of digraphs which have no member of $\mc F$ as an induced subdigraph. If $\mc F$ is explicitly given by a list of digraphs $F_{1},F_{2},\ldots, F_{k}$ we will write $\F(F_{1},F_{2},\ldots, F_{k})$ instead of the heavier $\F(\{F_{1},F_{2},\ldots, F_{k}\})$. For example, if we denote by $\ova{K_2}$ the oriented graph on two vertices with one arc, $\F(\ova{K_2})$ is the class of symmetric digraphs which is an equivalent representation of the class of graphs. 
A class $\mc C$ of digraphs is said to be \emph{hereditary} if for every digraph $D\in \mc C$, every induced subdigraph $D'$ of $D$ is in $\mc C$. It is clear from the definition that $\F(\mc F)$ is a hereditary class of digraph for any choice of $\mc F$. Moreover, every hereditary class $\mc C$ of digraphs can be presented as $\F(\mc F)$ where $\mc F$ is the set of minimal digraphs not belonging to $\mc C$.  This set $\mc F$ is not necessarily finite, but in this work we are only interested in the cases where $\mc F$ is finite.

Given a class of digraphs  $\mathcal{C}$, we define the \textit{dichromatic number of $\mathcal C$} as $\dic(\mathcal{C})=\max\{\dic(D) \mid D\in \mathcal{C}\}$ with understanding that $\dic(\mathcal{C})=\infty$ when it is not bounded.  

\medskip

In this paper, we try to extend \Gyar and Sumner interrogations to the world of digraphs, so we investigate the following problem:

\begin{problem}\label{prob:TheMainProblem}
	What are the finite sets $\mathcal{F}$ of digraphs for which the class $\F(\mathcal{F})$ 
	has bounded dichromatic number?
\end{problem}

In the next section we will explain why the results of~\cite{hero}, that constitute the starting point of our work, can be seen as a partial answer of Problem~\ref{prob:TheMainProblem}.

\section{Tournaments, Heroes and Heroic Sets}

A  \emph{tournament} is an oriented complete graph. Whereas complete graphs are somehow trivial objects regarding chromatic number, tournaments are already a complex and rich family with regards to dichromatic number. Observe for example that the {\em transitive tournament} on $n$ vertices (i.e. the unique up to isomorphism tournament on $n$ vertices that contains no directed cycle), denoted by $TT_{n}$, has, by the definition, dichromatic number $1$.
On the other hand there exists constructions of tournaments of arbitrarily large dichromatic number, as we explain now.

Given two digraphs $H_1$ and $H_2$ on disjoint sets of vertices, we denote by $H_1\Rightarrow H_2$ the digraph obtained from disjoint union of $H_1$ and $H_2$ by adding an arc from each vertex of $H_1$ to each vertex of $H_2$. Given $k$ directed graphs $D_1, D_2, \ldots, D_{k}$ we denote by $C_k(D_1, D_2, \ldots, D_{k})$ the digraph built as follows: for $i =1, \dots, k-1$  $D_i \Rightarrow D_{i+1}$, and  $D_{k} \Rightarrow D_1$. When all $D_i$'s are isomorphic to a digraph $D$ we may simply write $C_{k}(D)$ in place of $C_k(D_1, D_2, \ldots, D_{k})$. 

The following folklore theorem is straightforward to prove.

\begin{theorem}\label{coro:C_k(D)}
	Given a digraph $D$ and an integer $k\geq 3$, $\dic(C_k(D))=\dic(D)+1$.
\end{theorem}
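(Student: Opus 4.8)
The plan is to establish the two bounds $\dic(C_k(D))\ge \dic(D)+1$ and $\dic(C_k(D))\le \dic(D)+1$ separately, both resting on a single structural remark. Write $V_1,\dots,V_k$ for the vertex classes of $C_k(D)$, so that $V_i$ induces the $i$-th copy of $D$ and the only arcs between distinct classes are all arcs from $V_i$ to $V_{i+1}$ (indices taken cyclically). Hence every directed cycle of $C_k(D)$ either lies inside a single class $V_i$, or it uses a ``wrap'' arc, in which case it must run through the classes in cyclic order and meet each of them. In particular, if one colour $a$ appears in all $k$ classes, then picking an $a$-coloured $u_i\in V_i$ for each $i$ yields the monochromatic directed cycle $u_1\to u_2\to\cdots\to u_k\to u_1$, that is a monochromatic $\C{k}$; such a cycle is forbidden in any acyclic colouring.

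For the lower bound set $t=\dic(D)$ and suppose for contradiction that $C_k(D)$ has an acyclic colouring with only $t$ colours. Its restriction to any class $V_i$ is an acyclic colouring of a copy of $D$, so it uses at least $\dic(D)=t$ colours; as only $t$ colours are available, every class uses all of them. Then every colour appears in every class, and the remark above produces a monochromatic $\C{k}$ --- a contradiction. Therefore $\dic(C_k(D))\ge t+1$.

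For the upper bound I would aim for a colouring with the $t+1$ colours $\{1,\dots,t+1\}$ satisfying (i) each class is properly dicoloured and (ii) no colour is used in all $k$ classes; by the remark these two conditions already rule out every monochromatic directed cycle, the within-class ones by (i) and the wrap-around ones by (ii). To build such a colouring, fix an optimal $t$-dicolouring $c$ of $D$ and colour each class $V_i$ by a copy of $c$ whose colours are drawn from a $t$-element sub-palette $S_i\subseteq\{1,\dots,t+1\}$ (permuting the colours inside a class is harmless, since $D$ is $t$-dicolourable using any $t$ colours). Condition (i) is then automatic, and (ii) becomes the purely combinatorial requirement that every colour of $\{1,\dots,t+1\}$ be \emph{omitted} by at least one class.

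The step I expect to be the crux is precisely this last one. Each class must display at least $t$ of the $t+1$ colours, so it can omit \emph{at most one}, and one has to distribute the omissions over the $k$ cyclically arranged copies so that all $t+1$ colours get omitted somewhere. Equivalently, in the ``base colouring'' $c$ applied to every copy, each colour class spans a sub-digraph isomorphic to a $C_k$ of an acyclic digraph and must be cut at one class, the cuts being reassembled into the single extra colour; the delicate point is to perform all these cuts \emph{simultaneously} while keeping that extra colour class acyclic. This is the only place where the cyclic structure, and the interplay between $k$ and $\dic(D)$, genuinely enters the argument, and it is where I would concentrate the verification.
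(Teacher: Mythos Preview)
Your lower-bound argument is correct and complete. The paper itself gives no proof, calling the statement ``folklore'' and ``straightforward to prove''.

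Your hesitation about the upper bound is entirely justified: it is \emph{false} as stated, and your own pigeonhole remark already shows why. In any acyclic $(t{+}1)$-colouring of $C_k(D)$ with $t=\dic(D)$, each class $V_i$ must display at least $t$ colours and hence omits at most one; with only $k$ classes, at most $k$ colours get omitted, so whenever $t+1>k$ some colour appears in every class and yields a monochromatic $\C{k}$. Concretely, for $k=3$ and $D=\ovlra{K_3}$ one obtains $\dic(C_3(\ovlra{K_3}))\ge 5>4=\dic(D)+1$ (and $5$ is in fact the exact value). The equality $\dic(C_k(D))=\dic(D)+1$ does hold in the regime $\dic(D)<k$, where your ``omit one colour per class, cycling through the palette'' scheme goes through without difficulty. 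For the paper's applications only the inequality $\dic(C_k(D))\ge\dic(D)+1$ is really needed (to produce digraphs of unbounded dichromatic number), and the explicit recursion $D_k=C_3(D_{k-1},D_{k-1},K_1)$ used there sidesteps the issue altogether by placing a $K_1$ in one slot, which makes the upper bound trivial. So there is nothing further for you to supply: the step you singled out as the crux is exactly where the stated theorem breaks.
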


Using this, we can give the aforementioned construction of tournaments of arbitrarily large dichromatic number : consider $(D_{k})_{k\in \mathbb N}$ given by the induction $D_{1}=K_{1}$, $D_{k}=C_{3}(D_{k-1},D_{k-1},K_{1})$. It satisfies $\dic(D_{k})=k$ (see Figure \ref{fig:X3}).

\begin{figure}[htbp]
\begin{center}
\includegraphics[width=0.6\textwidth,page=1]{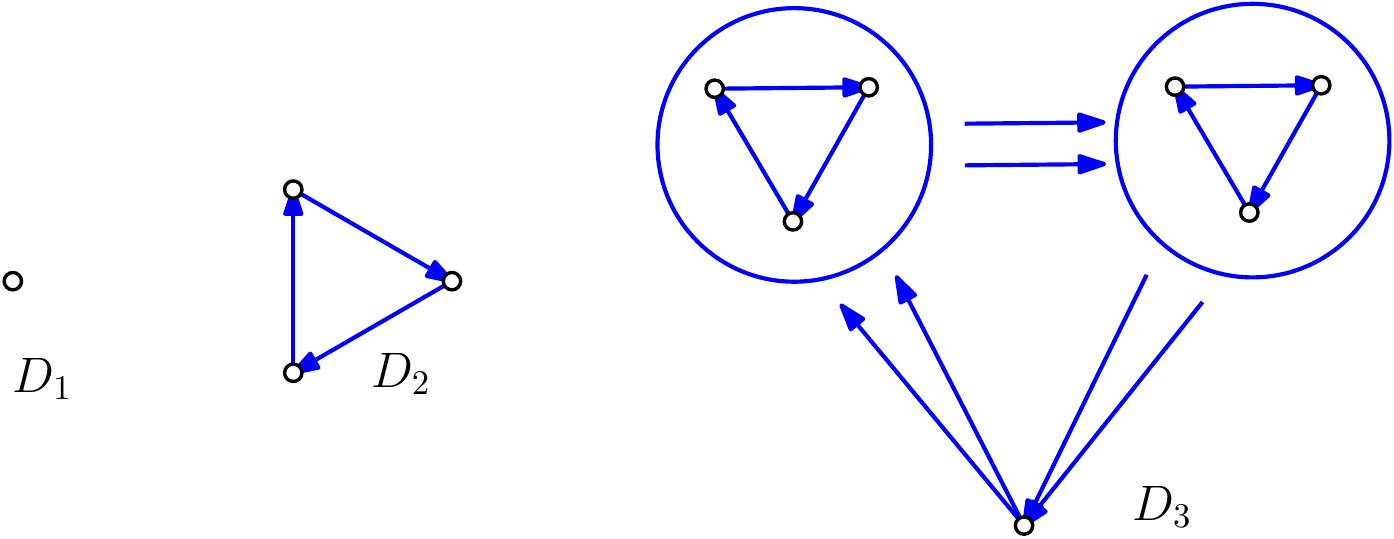}
\caption{Digraphs $D_{1}$,$D_{2}$,$D_{3}$}
\label{fig:X3}
\end{center}
\end{figure}

 Therefore Problem \ref{prob:TheMainProblem} is already meaningful when restricted to the family of tournaments and in a seminal paper~\cite{hero},  Berger et al. give a full characterization of tournaments $H$ such that the class of tournaments not containing $H$ has bounded dichromatic number. They call these tournaments {\em heroes}. 

\begin{theorem}\cite{hero}\label{thm:hero}
A tournament is a hero if and only if it can be constructed by the following inductive rules:
\begin{itemize}
	\item $K_1$ is a hero.
	\item If $H_1$ and $H_2$ are heroes, then $H_1 \Rightarrow H_2$ is also a hero.
	\item If $H$ is a hero, then for every $k \ge 1$, the tournaments $C_3(H, TT_k, K_1)$ and $C_3(TT_k, H, K_1)$ are both heroes. 
\end{itemize}

\end{theorem}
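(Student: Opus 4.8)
Write the theorem as an equivalence between the \emph{heroes} and the \emph{constructible} tournaments (those obtainable from the three rules). Two closure remarks guide the whole plan. First, if $H'$ is a subtournament of $H$ then every $H'$-free tournament is $H$-free, so any bound on $\dic$ valid for $H$-free tournaments is valid for $H'$-free ones; hence \emph{heroes are closed under taking subtournaments}. Secondly, constructible tournaments are closed under taking subtournaments as well: deleting vertices from $H_1\Rightarrow H_2$ or from $C_3(H,TT_k,K_1)$ and checking which parts survive shows the result is again obtainable from the rules. Thus it suffices to match the two notions on their minimal elements, and both directions may be organised around strong-component decompositions and an induction on $|V(H)|$. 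I would prove the two implications separately.

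For the ``constructible $\Rightarrow$ hero'' direction I would induct on the construction. The base case is immediate: a $K_1$-free tournament has no vertex, so $\dic=0$. The engine for the recursive rules is the subadditivity $\dic(T)\le \dic(N^+(v))+\dic(N^-(v))+1$ over the partition $\{v\}\cup N^+(v)\cup N^-(v)$, together with a structural lemma stating that large dichromatic number can be made to concentrate on \emph{both} sides of a single vertex: for every $c$ there is $d$ so that any tournament with $\dic>d$ has a vertex $v$ with $\dic(N^+(v))>c$ and $\dic(N^-(v))>c$. For the composition rule, assuming $H_1$-free and $H_2$-free tournaments have $\dic\le c_1,c_2$, I would strengthen the induction hypothesis to: large $\dic$ forces a copy of $H_1$ whose common out-neighbourhood $W$ satisfies $\dic(W)>c_2$. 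Then $W$ contains $H_2$, and $H_1\Rightarrow W$ contains $H_1\Rightarrow H_2$, bounding $\dic$ for $(H_1\Rightarrow H_2)$-free tournaments. This strengthened ``dominating a large-$\dic$ set'' statement is obtained by feeding the hero property of $H_1$ into the concentration lemma.

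The main obstacle is the third rule: that $C_3(H,TT_k,K_1)$ is a hero whenever $H$ is (the case $C_3(TT_k,H,K_1)$ following by arc reversal, since $\dic$ is reversal-invariant and reversal sends one family to the other). Unlike composition, one must now realise a \emph{cyclic} pattern $H\Rightarrow TT_k\Rightarrow z\Rightarrow H$ with a transitive part of unbounded prescribed length $k$, so no single domination step suffices. My plan is to use the concentration lemma to produce an apex vertex $z$ with both $N^+(z)$ and $N^-(z)$ of large $\dic$; to place $H$ inside $N^+(z)$ by the inductive hero property; and to extract a long transitive tournament $TT_k$ inside $N^-(z)$ that is simultaneously dominated by the chosen copy of $H$. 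The delicate point, and where I expect the real difficulty to lie, is guaranteeing that this $TT_k$ lies in the common out-neighbourhood of $H$ while still being dominated by $z$; controlling these three domination constraints at once forces an induction on $k$ with careful bookkeeping of the several $\dic$-thresholds, and is the technical heart of the whole theorem.

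For ``hero $\Rightarrow$ constructible'' I would argue the contrapositive and exploit the two closure remarks. A non-strongly-connected tournament is the $\Rightarrow$-join of its strong components and is constructible exactly when all of them are, so a \emph{minimal} non-constructible tournament is strongly connected. Since constructible tournaments are subtournament-closed, it is enough to witness each minimal non-constructible tournament $H$ — a short explicit list of small strongly connected tournaments, the cyclic tournament on five vertices being the first — by an $H$-free family of unbounded $\dic$. These families are produced by iterating the operation $C_3(\cdot,\cdot,K_1)$, as in the tournaments $D_k$, whose dichromatic number is unbounded by Theorem~\ref{coro:C_k(D)}; the content is to choose the seeds so that the rigid recursive shape of the resulting tournaments precludes an embedding of $H$. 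Identifying this finite list of minimal obstructions and verifying non-embeddability for each is the main labour of this direction.
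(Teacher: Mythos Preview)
The paper does not contain a proof of this theorem. It is quoted verbatim from \cite{hero} (Berger, Choromanski, Chudnovsky, Fox, Loebl, Scott, Seymour and Thomass\'e) and used as a black box throughout; the only related content in the present paper is the remark, just after the statement, that the five tournaments $H_1,\dots,H_5$ form the list of minimal non-heroes. So there is no ``paper's own proof'' to compare your proposal against.

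As for the proposal itself: as a high-level roadmap it is broadly aligned with the strategy of \cite{hero}, and you correctly locate the hard step (the $C_3(H,TT_k,K_1)$ rule) and the shape of the converse (reducing to a finite list of minimal strongly connected obstructions and building explicit $H$-free families of unbounded $\dic$). But what you have written is a plan, not a proof: the ``concentration lemma'' you invoke, the strengthened domination hypothesis for $H_1\Rightarrow H_2$, and especially the simultaneous control of the three domination constraints in the $C_3$ step all require substantial arguments that you have only named, not supplied. Since the present paper does not attempt any of this either, the appropriate resolution here is simply to cite \cite{hero} rather than to reprove it.
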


Extending the notion of hero, we say that a set $\mathcal{F}$ of digraphsis \emph{heroic} if $\F(\mathcal{F})$ has bounded dichromatic number. Problem \ref{prob:TheMainProblem} is then about characterizing finite heroic sets. 

We denote by $\overbar K_k $ the digraph made of $k$ vertices and no arc. Observing that $\F\{\ovlra K_2 , \overbar K_2\}$ is the set of all tournaments, a hero $H$ is then a digraph (necessarily a tournament) such that  $\{\ovlra{K_2}, \overbar K_2, H\}$ is heroic.

Hence, Theorem~\ref{thm:hero} characterizes all heroic sets of size three containing  $\ovlra K_2$ and $\overbar K_2$. However, it should be noted that minimal heroic sets of size  at least four containing $\ovlra K_2$ and $\overbar K_2$ exist. 
An example of such a set  is implicitly provided in \cite{hero}. Let $H_1$, $H_2$ and $H_3$ be the tournaments depicted in Figure~\ref{fig:H123}. Furthermore, let $H_4$ be $C_3(\ova{K_2},\ova{K_2},\ova{K_2})$ and let $H_5$ be $C_3(\ova{C_3},\ova{C_3},K_{1})$.
In~\cite{hero}, it is proved that the set $\{H_1,H_2, H_3,H_4,H_5\}$ is  the set of minimal non-hero tournaments, i.e., any tournament which is not a hero contains one of these five tournaments as a subdigraph. Using our terminology, it means that the set of heroes is precisely $\F\{\ovlra K_2, \overbar K_2,H_1,H_2, H_3,H_4,H_5\}$. 
Moreover it is easily observed that every hero is 2-colorable, so $\{\overbar{K_2}, \ovlra{K_2},H_1,H_2, H_3,H_4,H_5\}$ is a heroic set. We don't know if it is minimal or not but, since none of the $H_i$ are heroes  and by Theorem~\ref{thm:hero}, a minimal heroic subset of it has at least four elements. An interesting direction of research then would be to characterize all finite and minimal families $\mathcal{H}=\{ H_1, H_2, \ldots, H_k\}$ of tournaments such that $\{\overbar{K_2}, \ovlra{K_2}, H_1, H_2, \ldots, H_k\}$ is a heroic set. The main result of \cite{hero}, (Theorem~\ref{thm:hero}) characterizes such sets of order 1. It then implies that such a (minimal) set of order 2 or more cannot contain a hero tournament.

\begin{figure}\label{fig:H123}
	\centering
	\begin{tikzpicture}
	[scale=1]
	\foreach \i / \j  in {0/1, 72/2, 144/3, 216/4, 288/5}
	{\draw[rotate=\i] (0,2)  node[circle,draw=black!80, line width=0.6mm, inner sep=0mm, minimum size=2.5mm] (x\j){};}
	
	\foreach \i / \j in {1/2,2/3,3/4,4/5,5/1}
	{\draw[->, line width=0.6mm, blue] (x\i) -- (x\j);}
	
	\foreach \i / \j in {1/3,2/4,3/5,4/1,5/2}
	{\draw[->, line width=0.6mm, blue] (x\i) -- (x\j);}

\begin{scope}[shift={(5,0)}]
	\foreach \i / \j  in {0/1, 72/2, 144/3, 216/4, 288/5}
{\draw[rotate=\i] (0,2)  node[circle,draw=black!80, line width=0.6mm, inner sep=0mm, minimum size=2.5mm] (x\j){};}

\foreach \i / \j in {1/2,2/3,3/4,4/5,1/5}
{\draw[->, line width=0.6mm, blue] (x\i) -- (x\j);}

\foreach \i / \j in {1/3,2/4,3/5,4/1,5/2}
{\draw[->, line width=0.6mm, blue] (x\i) -- (x\j);}

\end{scope}

\begin{scope}[shift={(10,0)}]

	\foreach \i / \j  in {0/1, 72/2, 144/3, 216/4, 288/5}
{\draw[rotate=\i] (0,2)  node[circle,draw=black!80, line width=0.6mm, inner sep=0mm, minimum size=2.5mm] (x\j){};}

\foreach \i / \j in {1/2,1/3, 1/4, 2/3,2/4,3/4, 1/5,3/5, 5/2, 5/4}
{\draw[->, line width=0.6mm, blue] (x\i) -- (x\j);}

\end{scope}

	\end{tikzpicture}
	\caption{$H_1$, $H_2$ and $H_3$}	
	
\end{figure}
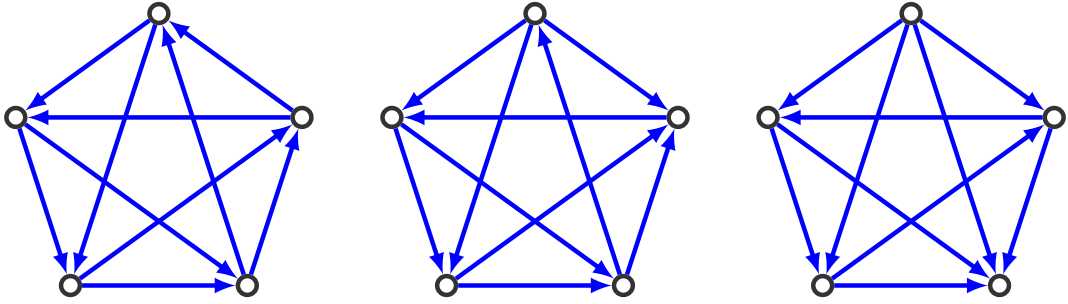

\medskip

An extension of Theorem~\ref{thm:hero} is given in \cite{HLNT} and will be extended in Theorem~\ref{thm:item2}.

\begin{theorem}[\cite{HLNT}] \label{thm:alpha}
For every integer $\alpha \geq 2$, the set $\{\ovlra K_2, \overline K_{\alpha}, H\}$
is heroic if and only if $H$ is a hero. 
\end{theorem}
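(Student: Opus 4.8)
The plan is to prove the two directions separately, after recording three tools. Throughout, membership in $\F\{\ovlra{K_2}, \overline{K_\alpha}, H\}$ means: an oriented graph (no digon, since $\ovlra{K_2}$ is forbidden) whose underlying graph has independence number at most $\alpha-1$, and which has no induced copy of $H$. The first tool is that when $H$ is itself a tournament, for oriented graphs ``no induced $H$'' coincides with ``no subdigraph isomorphic to $H$'': a subdigraph copy of the complete digraph $H$ forces all its arcs to be present, and the absence of digons forbids any extra arc, so the copy is automatically induced. The second tool is that $\dic(D)$ equals the maximum of $\dic$ over the strongly connected components of $D$ (colours may be reused between components, as every directed cycle lies inside one component), so we may assume $D$ is strongly connected. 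The third tool is Ramsey's theorem in the form: for fixed $\alpha$ and $m$ there is $R$ so that any digraph with more than $R$ vertices and independence number at most $\alpha-1$ contains a sub-tournament on $m$ vertices, hence a transitive subtournament $TT_{\lfloor \log_2 m\rfloor}$; bounded independence number is precisely what turns ``many vertices'' into ``large tournament''. I will also use the standard dicritical fact that any digraph with $\dic\ge d$ contains a subdigraph in which every vertex has in-degree and out-degree at least $d-1$.

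For the ``only if'' direction I argue by contraposition: assuming $H$ is not a hero, I show the set is not heroic. Every tournament has independence number $1$ and no digon, so every tournament avoiding an induced $H$ lies in $\F\{\ovlra{K_2}, \overline{K_\alpha}, H\}$. If $H$ is not a tournament, then no tournament contains $H$ as an induced subdigraph (induced subdigraphs of tournaments are tournaments), so $\F\{\ovlra{K_2}, \overline{K_\alpha}, H\}$ contains every tournament and hence has unbounded dichromatic number. If $H$ is a tournament that is not a hero, then already the subclass of $H$-free tournaments has unbounded dichromatic number by Theorem~\ref{thm:hero}. In both cases the class is not heroic.

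For the ``if'' direction, let $H$ be a hero; I prove by induction on the construction of $H$ given by Theorem~\ref{thm:hero} that for every $k$ there is a constant bounding $\dic(D)$ over all oriented graphs $D$ with independence number at most $k$ and no subdigraph $H$ (the case $k=\alpha-1$ being what we need, using the first tool). The base case $H=K_1$ is vacuous, and the building blocks $H=TT_\ell$ are handled directly by the Ramsey tool: a $TT_\ell$-free such $D$ has a bounded number of vertices, hence bounded $\dic$. For the rule $H=H_1\Rightarrow H_2$, I would take a strongly connected $D$ of huge dichromatic number, pass to a subdigraph of large minimum out-degree, and observe that every copy of $H_1$ must have a common out-neighbourhood that is $H_2$-free (otherwise $H_1\Rightarrow H_2$ appears); by induction such common out-neighbourhoods have $\dic\le c_2$, and one combines this information over a family of $H_1$-copies, using the Ramsey tool to promote partial dominations into the complete domination required by $\Rightarrow$, to bound $\dic(D)$ in terms of $c_1,c_2,k$. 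For the rule $H=C_3(H',TT_\ell,K_1)$ (and its mirror image) the target is a blown-up directed triangle $H'\Rightarrow TT_\ell\Rightarrow v_0\Rightarrow H'$; here I would pass to a subdigraph of large minimum in- and out-degree, locate $H'$ via the inductive bound together with the large dichromatic number, and use the Ramsey tool inside the in- and out-neighbourhoods of a suitable vertex $v_0$ to build the transitive part and realise the three complete dominations that close up the triangle.

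The main obstacle is exactly the extraction of complete dominations (the $\Rightarrow$ joins) between prescribed substructures: high dichromatic number and the inductive hypothesis locate unordered copies of $H_1$, $H_2$ or $H'$ somewhere in $D$, but the hero pattern demands that these copies be completely joined to one another and to a transitive tournament in a specified cyclic pattern. Bounded independence number is the essential lever, since it is what allows a Ramsey/pigeonhole cleaning to turn a dense collection of partial, one-vertex-at-a-time dominations into a single complete bipartite domination; carrying out this cleaning while keeping all the quantitative bounds finite, and correctly interleaving it with the strong-connectivity and minimum-degree reductions, is the technical heart of the argument.
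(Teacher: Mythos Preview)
The paper does not prove this theorem; it is quoted from \cite{HLNT} and invoked as a black box (and later subsumed by Theorem~\ref{thm:item2}), so there is no in-paper argument to compare your proposal against.

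On its own merits: your ``only if'' direction is complete and correct. Your ``if'' direction, however, is a plan rather than a proof. You set up the natural induction on the recursive construction of heroes from Theorem~\ref{thm:hero}, but for both inductive rules ($H_1\Rightarrow H_2$ and $C_3(H',TT_\ell,K_1)$) you write ``I would\ldots'' and then describe in broad terms what must happen without establishing that it does. You yourself name the obstacle in your final paragraph: converting the inductive hypothesis---which only locates copies of $H_1$, $H_2$, or $H'$ \emph{somewhere} in $D$---into copies that sit in the precise complete-domination configuration that $\Rightarrow$ and the $C_3$ construction demand. Bounded independence number and Ramsey-type cleaning are indeed the right levers, but the execution in \cite{HLNT} is substantial and does not reduce to the one sentence per case you allot. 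As written, your proposal correctly identifies the scaffolding and the difficulty, but the load-bearing steps are left unbuilt.
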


\section{Digraphs that must be contained in all heroic sets}

One direction in the study of heroic sets is to find infinite families of digraphs whose dichromatic numbers increase. Given such a family, a heroic set $\mathcal{F}$ must contain one or more elements which removes the tail of the sequence from $\F(\mathcal{F})$. This would imply existence of certain types of digraphs in every heroic set. 
In this section, we give four such families. They can be seen as analogues of the two sequences used to prove the \textit{only if} part of Gy\'arf\'as-Sumner Conjecture (that is complete graphs and graphs with arbitrarily large girth and chromatic number). The situations is actually more complicated in the framework of directed graphs, and two more such sequences will be given in  Sections~\ref{sec:SecondItem} and~\ref{ss:ifC3}.

\begin{theorem}[Erd\"os \cite{Erd59}]\label{thm:HighGirth-HighChromatic}
Given positive integers $g$ and $k$ there exists a graph of girth at least $g$ and 
chromatic number $k$.
\end{theorem}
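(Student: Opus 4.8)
The plan is to use Erd\H os's probabilistic deletion method, so I will exhibit the desired graph as a random graph from which a few vertices have been removed. Assume $g\ge 3$ (for $g\le 2$ the statement is degenerate) and $k\ge 2$. Fix a real number $\theta$ with $0<\theta<1/g$, set $p=p(n)=n^{\theta-1}$, and let $G$ be the random graph on vertex set $\{1,\dots,n\}$ in which each of the $\binom{n}{2}$ possible edges is present independently with probability $p$. The parameter $\theta$ is chosen so as to simultaneously keep the number of short cycles sublinear and force the independence number to be small; I will let $n\to\infty$ at the very end.

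First I would control short cycles. For $3\le i\le g-1$, the expected number $X_i$ of cycles of length $i$ in $G$ satisfies $\mathbb E[X_i]\le \frac{(np)^i}{2i}$, since there are at most $\frac{n^i}{2i}$ potential such cycles, each present with probability $p^i$. Writing $X=\sum_{i=3}^{g-1}X_i$ for the total number of cycles shorter than $g$, and using $np=n^{\theta}$ together with $\theta(g-1)<1$, one gets $\mathbb E[X]=O\!\left(n^{\theta(g-1)}\right)=o(n)$, so Markov's inequality gives $\Pr[X\ge n/2]\to 0$. Next I would bound the independence number: setting $a=\lceil 3p^{-1}\ln n\rceil=O(n^{1-\theta}\ln n)$, a union bound yields
\[
\Pr[\alpha(G)\ge a]\ \le\ \binom{n}{a}(1-p)^{\binom{a}{2}}\ \le\ \left(n\,e^{-p(a-1)/2}\right)^{a}\ \to\ 0,
\]
because $n\,e^{-p(a-1)/2}\approx n\cdot n^{-3/2}\to 0$. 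Hence for all large enough $n$ both bad events have probability less than $1/2$, so there exists a concrete graph $G$ with fewer than $n/2$ short cycles and with $\alpha(G)<a$.

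Finally I would clean up and count colors. Delete one vertex from each cycle of length less than $g$ in this fixed $G$; this removes fewer than $n/2$ vertices, so the resulting graph $G'$ has at least $n/2$ vertices and girth at least $g$, while $\alpha(G')\le \alpha(G)<a$. Therefore
\[
\chi(G')\ \ge\ \frac{|V(G')|}{\alpha(G')}\ \ge\ \frac{n/2}{a}\ =\ \Omega\!\left(\frac{n^{\theta}}{\ln n}\right),
\]
which exceeds $k$ once $n$ is large. This produces a graph of girth at least $g$ and chromatic number at least $k$; passing to a vertex-minimal induced subgraph of chromatic number $\ge k$ (deletions never decrease girth and drop $\chi$ by at most one at a time) yields one of chromatic number exactly $k$. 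The single delicate point, and the real crux of the argument, is the choice of the exponent $\theta\in(0,1/g)$: increasing $p$ shrinks the independence number but increases the number of short cycles, so the two estimates pull in opposite directions, and the whole proof hinges on the fact that the window $0<\theta<1/g$ is nonempty and makes both bounds succeed at once.
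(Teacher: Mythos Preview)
Your argument is correct and is precisely Erd\H{o}s's classical probabilistic deletion proof. Note that the paper does not supply its own proof of this statement: it is quoted as a known result with a citation to \cite{Erd59}, so there is no in-paper proof to compare against; your write-up is the standard one and would be perfectly acceptable here.
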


An analogue of this theorem for dichromatic number of oriented graphs is proved in \cite{girth}.

\begin{theorem}\cite{girth}\label{thm:HighGirth-HighDihromatic}
	Given positive integers $g$ and $k$ there exists an oriented graph whose underlying graph has girth at least $g$ and 
	whose dichromatic number is at least $k$.
\end{theorem}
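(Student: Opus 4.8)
The plan is to adapt Erd\H{o}s's probabilistic argument behind Theorem~\ref{thm:HighGirth-HighChromatic} to the directed setting, with \emph{acyclic sets} (vertex subsets inducing an acyclic subdigraph) playing the role that independent sets play for the chromatic number. The starting observation is that if $a(D)$ denotes the maximum size of an acyclic set in a digraph $D$ on $n$ vertices, then in any dicoloring each color class is acyclic, hence of size at most $a(D)$; summing over color classes gives $n \le \dic(D)\cdot a(D)$, i.e.\ $\dic(D) \ge n/a(D)$. So it suffices to produce, for each $n$, an oriented graph whose underlying graph has few short cycles and all of whose acyclic sets are small.

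First I would introduce the random oriented graph $D$ on vertex set $\{1,\dots,n\}$: for each pair $\{u,v\}$, independently, insert the arc $u\to v$ with probability $p/2$, the arc $v\to u$ with probability $p/2$, and no arc with probability $1-p$, where $p=p(n)=n^{\theta-1}$ for a fixed $\theta$ with $0<\theta<1/g$. Since at most one arc joins each pair, $D$ is genuinely an oriented graph, and its underlying graph is exactly the Erd\H{o}s--R\'enyi graph $G(n,p)$, so the girth analysis is identical to the classical one: the expected number of cycles of length less than $g$ in the underlying graph is at most $\sum_{\ell=3}^{g-1}(np)^\ell=\sum_{\ell=3}^{g-1}n^{\theta\ell}=o(n)$, because $\theta(g-1)<1$.

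Next I would bound the acyclic sets, which is the one genuinely new ingredient. A set $S$ of size $s$ induces an acyclic subdigraph exactly when some linear ordering of $S$ is a topological order, i.e.\ has no backward arc. For a fixed ordering, each of the $\binom{s}{2}$ pairs avoids being a backward arc with probability $1-p/2$, independently, so a union bound over the $s!$ orderings gives $\Pr[S\text{ acyclic}]\le s!\,(1-p/2)^{\binom{s}{2}}$. Summing over the $\binom{n}{s}$ choices of $S$ and using $\binom{n}{s}s!\le n^s$ yields $\Pr[a(D)\ge s]\le n^{s}(1-p/2)^{\binom{s}{2}}$. I would then check that for a suitable $s=\lceil C\,p^{-1}\ln n\rceil$ this tends to $0$, by comparing the exponent $s\ln n$ against $\tfrac{p}{2}\binom{s}{2}=\Theta(ps^2)$, so that with high probability $a(D)<s$ and hence $\dic(D)\ge n/s=\Omega\!\big(n^{\theta}/\ln n\big)$.

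Finally I would combine the two estimates. By Markov's inequality $\Pr[\#\{\text{short cycles}\}\ge n/2]=o(1)$, while $\Pr[a(D)\ge s]=o(1)$, so for $n$ large both bad events have total probability below $1$ and there is an outcome where the underlying graph has fewer than $n/2$ cycles of length below $g$ and $a(D)<s$. Deleting one vertex from each such short cycle removes fewer than $n/2$ vertices and produces an oriented graph $D'$ of girth at least $g$ on at least $n/2$ vertices; since acyclic sets only shrink under induced subdigraphs, $a(D')\le a(D)<s$, whence $\dic(D')\ge (n/2)/s=\Omega(n^\theta/\ln n)\ge k$ once $n$ is large enough. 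I expect the acyclic-set bound to be the main point requiring care: relative to the independence-number estimate it has lost the favorable factor $1/s!$ (from the union over topological orders) and has $p$ replaced by $p/2$, so one must confirm that the doubly-exponential decay of $(1-p/2)^{\binom{s}{2}}$ still dominates $n^{s}$ for the chosen $s$ and $p$. Everything else mirrors the undirected proof.
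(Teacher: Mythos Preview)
The paper does not give its own proof of this theorem: it is quoted from \cite{girth} (Harutyunyan--Mohar) and used as a black box, so there is no in-paper argument to compare against.

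That said, your proposal is a correct proof. The reduction $\dic(D)\ge n/a(D)$ is valid, the girth analysis is the standard Erd\H{o}s computation applied to the underlying $G(n,p)$, and the acyclic-set estimate is the genuinely new step you handle properly: for a fixed linear order on $S$ the event ``no backward arc'' has probability exactly $(1-p/2)^{\binom{s}{2}}$ by independence, and the union bound over the $s!$ orders yields $\Pr[a(D)\ge s]\le n^{s}(1-p/2)^{\binom{s}{2}}$. With $p=n^{\theta-1}$ and $s=\lceil C\,p^{-1}\ln n\rceil$ one has
\[
\ln\!\big(n^{s}(1-p/2)^{\binom{s}{2}}\big)\le s\ln n-\tfrac{p}{2}\binom{s}{2}
= n^{1-\theta}(\ln n)^{2}\bigl(C-\tfrac{C^{2}}{4}\bigr)(1+o(1)),
\]
which tends to $-\infty$ as soon as $C>4$; so the extra $s!$ factor (versus the undirected independence-number bound) is indeed harmless. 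Deleting one vertex from each of the $<n/2$ short cycles then gives an oriented graph of girth $\ge g$ with $\dic\ge (n/2)/s=\Omega(n^{\theta}/\ln n)$. This is essentially the probabilistic argument one finds in \cite{girth}.
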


Let us consider the four following families of digraphs:
 \begin{itemize}
  \item[1.] The family $\ovlra{K_{n}}$ of complete symmetric digraphs (because $\dic(\overleftrightarrow{K_n})=n$).
  \item[2.] The family $\ovlra{G_{{g,k}}}$, where for any positive integers $g$ and $k$, $G_{g,k}$ is a graph of girth at least $g$ and chromatic number at least $k$ (whose existence is guaranteed by Theorem~\ref{thm:HighGirth-HighChromatic}).   
  \item[3.] The family $H_{{g,k}}$, where for any positive integers $g$ and $k$, $H_{g,k}$ is an oriented graph whose underlying graph is of girth at least $g$ and whose dichromatic number is at least $k$ (whose existence is guaranteed by Theorem~\ref{thm:HighGirth-HighDihromatic}).
  
  \item[4.] The family of tournaments 
  \end{itemize}

Thus for the dichromatic of a class $\F(\mathcal{F} )$ of digraphs to be finite, 
$\mathcal{F}$ must contain at least four \textit{types} of elements:	

\begin{itemize}
 \item A complete symmetric digraph $\overleftrightarrow{K_k}$ for some integer $k$, because of the first family.
 \item A symmetric forest $ \ovlra{F_1}$ because of the second one (no other digraph can be a common induced subdigraph of the whole family because of the growing girth).
  \item An oriented forest $\ova{F_2}$ because of the third one (same argument).
  \item A tournament $T$ because of the fourth one (note that as explained in the previous subsection, if a heroic set contain a single tournament, then it must be a hero).
\end{itemize}



A digraph might be of several types. In particular, the digraph made of a single vertex is of all four types. We describe below all other digraphs that are of several types (actually each of them is of exactly two distinct types):

\begin{itemize}
 \item $\ovlra K_2$ is a complete symmetric digraph and a symmetric forest,
 \item for every integer $\alpha \geq 2$, $\overline{K}_{\alpha}$ is a symmetric forest, and an oriented forest,
 \item $\ova{K_2}$ is an oriented forest and a hero. 
\end{itemize}

As a direct consequence, we have a characterization of heroic set of size two. Note that $\F(\ovlra K_2, \ova{K_2})$ is the class of graphs with no arc, and is thus $1$-colorable. 

\begin{theorem}
The set $\{\ovlra K_2, \ova K_2\}$ is the unique heroic set of size two. 
\end{theorem}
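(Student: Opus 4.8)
The plan is to combine the necessary condition established just above—that every heroic set must contain a member of each of the four types—with a short combinatorial analysis of how two digraphs can realize all four types at once. The easy direction is immediate: since $\F(\ovlra K_2, \ova K_2)$ forbids both the digon and the single arc, it consists precisely of the arcless digraphs, each of which is acyclic and hence $1$-colorable, so $\{\ovlra K_2, \ova K_2\}$ is indeed heroic.

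For uniqueness, let $\mathcal F = \{A, B\}$ be any heroic set of size two. By the four families $\ovlra{K_n}$, $\ovlra{G_{g,k}}$, $H_{g,k}$, and the tournaments, $\mathcal F$ must contain a complete symmetric digraph (type~1), a symmetric forest (type~2), an oriented forest (type~3), and a tournament (type~4); with only two elements available, $A$ and $B$ together must cover all four types. I would then invoke the classification recalled above: apart from the single vertex, every digraph lies in at most two of the four types, and the only digraphs of exactly two types are $\ovlra K_2$ (types~$1,2$), $\overline{K}_\alpha$ for $\alpha \ge 2$ (types~$2,3$), and $\ova K_2$ (types~$3,4$). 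Hence each of $A$ and $B$ must be of exactly two types, and the two corresponding pairs must partition $\{1,2,3,4\}$. Among the available pairs $\{1,2\}$, $\{2,3\}$, $\{3,4\}$, the only two that are disjoint with union $\{1,2,3,4\}$ are $\{1,2\}$ and $\{3,4\}$, forcing $\{A,B\} = \{\ovlra K_2, \ova K_2\}$.

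The step needing the most care is the degenerate role of the single vertex $K_1$, which is of all four types: a set containing it would vacuously cover every type, but forbidding $K_1$ leaves only the empty digraph, so we discard this trivial case (equivalently, we restrict attention to forbidden digraphs with at least two vertices). I would also explicitly rule out that one member is a pure type-$1$ digraph $\ovlra{K_k}$ with $k \ge 3$, which covers type~$1$ alone: then the other member would have to realize types~$2$, $3$ and $4$ simultaneously, which is impossible, since a symmetric forest containing an edge has a digon whereas a tournament on at least two vertices has none. This confirms both that the classification is exhaustive and that no size-two heroic set other than $\{\ovlra K_2, \ova K_2\}$ exists.
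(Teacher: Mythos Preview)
Your proposal is correct and follows essentially the same approach the paper intends: the paper states the theorem as ``a direct consequence'' of the four-types discussion immediately preceding it (and only notes that $\F(\ovlra K_2,\ova K_2)$ is the class of arcless digraphs), while you simply spell out the pigeonhole argument explicitly. Your handling of the $K_1$ degeneracy is a reasonable acknowledgment of an issue the paper leaves implicit; the extra case analysis in your final paragraph is already subsumed by the pigeonhole step in the second paragraph and could be trimmed.
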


The main study of this paper concerned heroic sets  of size three. If $\mathcal F$ is such a set, it results from above that $\mathcal F$ must contain at least one of $\ovlra K_2$, $\overline K_{\alpha}$, $\ova K_2$. 
If $\ova K_2$ belongs to $\mathcal F$, then we are  in the world of graphs (rather than digraphs) and thus we are in the framework of the well known Gy\'arf\'as-Sumner conjecture. Hence, we study the two other cases. More precisely, we ask:

\begin{enumerate}
\item[(\Pb1)]\label{P1} For  which hero $H$ and integers $k \geq 2$, $\alpha \geq 2$, is  $\{ \ovlra K_k,  \overline{K}_{\alpha}, H \}$   heroic?
\item[(\Pb2)]\label{P2} For which hero $H$ and oriented forest $F$, is $\{\ovlra K_2,H, {F}\}$ heroic?
\end{enumerate}

Observe that (\Pb2) is about oriented graphs and it is, in our opinion, of particular interest.

\section{Results and conjectures}

We completely settle (\Pb1):
\begin{theorem}\label{thm:item2}
Let $H$ be a hero and $k, \alpha \ge 2$ be integers. The set $\{\ovlra K_k, \overline K_{\alpha}, H\}$ is heroic if and only if $H$ is a transitive tournament or $k=2$
\end{theorem}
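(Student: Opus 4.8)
The plan is to prove the two directions separately, splitting the ``if'' direction into the two disjuncts and proving the ``only if'' direction by contraposition.

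\medskip
\noindent\textbf{The ``if'' direction.} First, if $k=2$ the statement is exactly Theorem~\ref{thm:alpha}: since $H$ is a hero, $\{\ovlra K_2, \overline{K_{\alpha}}, H\}$ is heroic. So I would next assume instead that $H=TT_n$ is a transitive tournament and show that $\F(\ovlra K_k, \overline{K_{\alpha}}, TT_n)$ contains only digraphs of bounded order; since $\dic(D)\le |V(D)|$, this bounds the dichromatic number. The argument is a double application of Ramsey's theorem. Take $D$ in this class with $N$ vertices. Its underlying graph has independence number at most $\alpha-1$ (no induced $\overline{K_{\alpha}}$), so for $N$ large it contains an arbitrarily large clique $C$. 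On $C$ every pair is adjacent, so I colour each pair of $C$ ``digon'' or ``single arc''; the digon-coloured pairs form a $K_k$-free graph (no induced $\ovlra K_k$), so Ramsey applied to $C$ yields, once $C$ is large enough, a large set $T$ all of whose pairs are single arcs, i.e.\ a tournament that is induced in $D$. A tournament on at least $2^{n-1}$ vertices contains $TT_n$, and this copy is induced in $D$ since it uses only single arcs; this contradicts $D\in\F(\dots,TT_n)$. Hence $N$ is bounded by a Ramsey number depending only on $k,\alpha,n$.

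\medskip
\noindent\textbf{The ``only if'' direction.} Here I would prove the contrapositive: if $k\ge 3$ and $H$ is not transitive, then $\{\ovlra K_k, \overline{K_{\alpha}}, H\}$ is not heroic. The bridge is the observation that a non-transitive tournament contains a directed triangle, and in a tournament this triangle is automatically an \emph{induced} subdigraph; thus $H$ contains $\C{3}$ as an induced subdigraph. Consequently any digraph with no induced $\C{3}$ has no induced $H$, so it suffices to exhibit a family of digraphs with unbounded dichromatic number, no induced $\C{3}$, digon-cliques of size less than $k$, and independence number less than $\alpha$.

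\medskip
\noindent\textbf{The construction.} For each $N$ let $G$ be a triangle-free graph with $\chi(G)\ge N$ (Theorem~\ref{thm:HighGirth-HighChromatic} with $g=4$). Fix a linear order on $V(G)$ and build $D_G$ on vertex set $V(G)$ by placing a digon on every edge of $G$ and a single arc, oriented by the linear order, on every non-edge of $G$. I claim $D_G$ has all required properties. Its underlying graph is complete, so its independence number is $1<\alpha$. Its digons are exactly the edges of $G$, which is triangle-free, so its digon-cliques have size at most $2<k$ (here $k\ge 3$ is used). Since $D_G$ contains $\ovlra{G}$ as a subdigraph, every dicolouring of $D_G$ restricts to one of $\ovlra{G}$, whose colour classes are independent sets of $G$; hence $\dic(D_G)\ge \dic(\ovlra{G})=\chi(G)\ge N$. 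Finally, an induced $\C{3}$ would be three vertices carrying three single arcs in a directed cycle and no digon; the absence of digons forces these three vertices to be pairwise non-adjacent in $G$, so all three arcs come from the linear-order orientation, which is acyclic -- a contradiction. Thus $D_G$ has no induced $\C{3}$, hence no induced $H$, while $\dic(D_G)\to\infty$, so the set is not heroic.

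\medskip
\noindent The main point to get right -- and the one that makes the construction possible -- is that $\C{3}$ is forbidden only as an \emph{induced} subdigraph, so that $\ovlra K_3$ (which contains a directed triangle, but not an induced one) is permitted. This is exactly what allows the symmetric (digon) part of $D_G$ to carry the full chromatic number of a triangle-free graph while the single arcs, oriented transitively, never close an induced directed triangle. The reduction ``$H$ contains an induced $\C{3}$'' then lets this single construction serve every non-transitive hero at once, so the only genuine external inputs are Theorem~\ref{thm:alpha}, the double Ramsey bound, and the existence of triangle-free graphs of unbounded chromatic number.
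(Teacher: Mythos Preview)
Your proof is correct and follows essentially the same route as the paper's: the ``if'' part is the same Ramsey argument (you phrase it as two applications of two-colour Ramsey, the paper as one application of three-colour Ramsey), the $k=2$ case is Theorem~\ref{thm:alpha}, and your ``only if'' construction is exactly the paper's semi-complete digraph built from a triangle-free graph of large chromatic number by putting digons on edges and transitively oriented single arcs on non-edges. The only cosmetic difference is that the paper invokes graphs of arbitrarily large girth while you observe that triangle-free already suffices, which it does.
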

  
For (\Pb2), we venture to propose the following conjecture. A \textit{star} is a tree with at most one vertex of degree more than $1$. An \textit{oriented star} is an orientation of a star. By\textit{ disjoint union of oriented stars} we mean an oriented forest whose connected components are oriented stars. If $D_1$ and $D_2$ are two digraphs, we denote by $D_1+D_2$ the disjoint union of $D_1$ and $D_2$. 

\begin{conjecture}\label{conj:oriented}
Let $H$ be a hero and let $F$ be an oriented forest. The set $\{\ovlra K_2, H, F\}$ is heroic if and only if: either
\begin{itemize}
\item $F$ is the disjoint union of oriented stars,\\
 or
\item $H$ is a transitive tournament.
\end{itemize}
\end{conjecture}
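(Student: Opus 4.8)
I would prove the two directions separately, and I expect the necessity (``only if'') direction to be fully provable by a single explicit construction, while the sufficiency (``if'') direction is the genuinely hard part that keeps the statement at the level of a conjecture. For necessity I argue the contrapositive: assuming that $H$ is \emph{not} a transitive tournament and that $F$ is \emph{not} a disjoint union of oriented stars, I will exhibit oriented graphs of unbounded dichromatic number avoiding both $H$ and $F$ as induced subdigraphs. Two reductions drive the construction. Since $H$ is a non-transitive tournament it contains a directed triangle, hence an induced $\ova{C_3}$; thus any digraph with no $\ova{C_3}$ at all avoids $H$. Since $F$ is a forest that is not a union of stars, one of its components has diameter at least $3$ and therefore $F$ contains an induced path on four vertices; thus any digraph whose underlying graph contains no induced $P_4$ avoids $F$ (an induced copy of $F$ would force an induced $P_4$ in the underlying graph).

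The construction is $G_0 = K_1$ and $G_t = C_4(G_{t-1})$. By Theorem~\ref{coro:C_k(D)} we get $\dic(G_t) = \dic(G_{t-1}) + 1 = t+1$, so the dichromatic number is unbounded. It remains to verify the two avoidance properties. For $P_4$-freeness, observe that in $C_4(X_0,X_1,X_2,X_3)$ the sets $X_0\cup X_2$ and $X_1\cup X_3$ are completely joined, while $X_0$--$X_2$ and $X_1$--$X_3$ carry no edges; hence the underlying graph of $G_t$ is obtained from that of $G_{t-1}$ by two disjoint unions followed by a join, so by induction it is a cograph and contains no induced $P_4$, and in particular no induced $F$. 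For $\ova{C_3}$-freeness, note that any triangle of $G_t$ either lies inside a single class (no $\ova{C_3}$ there by induction) or meets exactly two consecutive classes, in which case its three arcs form a transitive triangle (two vertices in one class both send arcs to the third, or one vertex sends arcs to the other two); it cannot meet two opposite classes (no edges) nor three distinct classes (any three of the four cyclic classes contain an opposite pair). Hence $G_t$ has no $\ova{C_3}$ and thus no induced $H$, which shows that $\{\ovlra{K_2}, H, F\}$ is not heroic and settles the necessity direction in full.

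For sufficiency there are two independent implications to establish. When $F$ is a disjoint union of oriented stars, forbidding it as an induced subdigraph is a packing-type restriction: it limits how many pairwise non-adjacent induced stars can coexist, which in turn constrains the independence numbers occurring inside the in- and out-neighbourhoods of the vertices. I would combine this with the hero theorem (Theorem~\ref{thm:hero}) through the \emph{nice set} machinery, using the bounded local structure to locate a nice set on which the hero bound applies and then bootstrapping to the whole graph. When $H = TT_m$ is a transitive tournament, Ramsey's theorem applied to the cliques of the underlying graph (which, being digon-free, are tournaments) shows that forbidding $TT_m$ forces the underlying clique number below $2^{m-1}$; the problem then reduces to a dichromatic analogue of Gy\'arf\'as--Sumner, namely that forbidding a large transitive tournament together with an induced oriented forest must force bounded dichromatic number.

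The main obstacle is precisely this transitive case for an \emph{arbitrary} oriented forest $F$. It is a directed, dichromatic version of the Gy\'arf\'as--Sumner statement; it is not literally implied by the undirected conjecture, since orienting a graph can collapse its chromatic number without collapsing its dichromatic number, but it appears to be of comparable difficulty. Only special cases seem within reach: bounded $m$ (the triangle-free and $K_4$-free regimes) and restricted $F$ (disjoint unions of directed paths), which is exactly the content of the partial results established with the nice-set apparatus. This asymmetry, the ``only if'' direction settled cleanly by the cograph construction above while both ``if'' implications, and especially the transitive case, resist a general argument, is what justifies stating the equivalence as a conjecture rather than a theorem.
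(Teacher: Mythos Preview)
Your proof of the ``only if'' direction is correct and takes exactly the same route as the paper: the iterated construction $G_0=K_1$, $G_t=C_4(G_{t-1})$ is precisely the sequence the paper uses, and your reductions (non-transitive hero $\Rightarrow$ contains $\ova{C_3}$; forest that is not a union of stars $\Rightarrow$ underlying graph contains an induced $P_4$) are identical to the paper's. You supply more detail than the paper does---the cograph argument for $P_4$-freeness and the case analysis for $\ova{C_3}$-freeness are both sound and make explicit what the paper leaves as ``easily verified''---and you correctly recognise that the ``if'' direction is the open part that keeps the statement a conjecture.
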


We prove the \textit{only if} part of this conjecture in Section~\ref{ss:onlyif}. Chudnovsky, Scott and Seymour~\cite{CSS19} proved that, when both conditions hold, that is $H$ is a transitive tournament and $F$ is a disjoint union of oriented stars, the chromatic number of digraphs in  $\F\{\ovlra K_2, H, F\}$ is bounded, As the chromatic number of a digraph is an upper for its dichromatic number, this indeed implies that $\{\ovlra K_2, H, F\}$ is a heroic for such choices of $H$ and $F$.

In Sections~\ref{ss:ifC3} and \ref{ss:ifTT} we prove the following positive results. Before stating them, we need to extend our notation $\F(\mc F)$ by allowing (non-oriented) graphs in $\mc F$. If $\mc F$ is such a set, we define $\F(\mc F)$ to be the set of digraphs that does not contain as an induced subdigraph : any digraph of $\mc F$, and any orientation of a non-oriented graph of $\mc F$. For example $\F(K_3, \ovlra{K_2})$ is the class of triangle-free oriented graphs.

\begin{theorem}\label{thm:bounds}
The following sets are heroic:
\begin{itemize}
\item  $\{\ovlra K_2, \C{3}, \ova K_2 + K_1\}$,
\item  $\{\ovlra K_2, K_4, P^+(3)\}$, where $P^+(3)$ is the directed path of length $3$. 
\end{itemize}
\end{theorem}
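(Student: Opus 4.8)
The plan is to prove both bullets using the same two reductions. First, since every directed cycle lies inside a single strong component, we have $\dic(D)=\max\{\dic(D[C]) : C \text{ a strong component of } D\}$, so in each case it suffices to bound $\dic$ on strongly connected members of the class. Second, I will repeatedly use the inequality $\dic(D)\le \chi(D)$ from the introduction, which in particular gives $\dic(D)\le 2$ whenever the underlying graph of $D$ is bipartite.

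For the first bullet, the key step is to reformulate the forbidden patterns. Forbidding $\ova K_2 + K_1$ as an induced subdigraph in an oriented graph is exactly forbidding $K_2+K_1$ in the underlying graph, i.e. it says the underlying graph is complete multipartite; and forbidding $\C3$ says that no triangle is directed, so every triangle (necessarily a transversal triple meeting three distinct parts) is transitive. Thus a strongly connected member of $\F(\ovlra K_2,\C3,\ova K_2 + K_1)$ is a strong orientation of a complete multipartite graph with no directed triangle. I would then invoke the classical fact that every strongly connected orientation of a complete multipartite graph with at least three parts contains a directed triangle. This can also be shown directly: a shortest directed cycle must alternate between two parts $A,B$, every remaining vertex is forced (by transitivity of transversal triples) either to dominate or to be dominated by this whole cycle, and finiteness of the resulting ``domination'' order produces a source or a sink, contradicting strong connectivity. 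Hence a strong component meets at most two parts, its underlying graph is bipartite, and $\dic\le 2$. This bounds $\dic$ on the whole class by $2$, so the set is heroic.

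For the second bullet I would first isolate the triangle-free case as the base of the argument: every oriented graph that is triangle-free and has no induced $P^+(3)$ satisfies $\dic\le 2$. After reducing to a strong component, I would examine a shortest directed cycle $v_0\to\cdots\to v_{\ell-1}\to v_0$. Triangle-freeness forbids the short chords $v_iv_{i+2}$, and if $\ell\ge 5$ the subpath $v_0\to v_1\to v_2\to v_3$ has no chord at all (any chord would create a triangle or a strictly shorter directed cycle), so it is an induced $P^+(3)$, a contradiction; hence the directed girth is exactly $4$. A structural analysis of how the remaining vertices attach to such a $\C4$, organised through the notion of \emph{nice sets}, then yields an explicit partition of the component into two acyclic sets.

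Finally, for the $K_4$-free case I would bootstrap from the triangle-free lemma. If $D$ is $K_4$-free with no induced $P^+(3)$, then for every vertex $v$ the subdigraphs $D[N^+(v)]$ and $D[N^-(v)]$ are triangle-free (a triangle there together with $v$ would be a $K_4$) and still have no induced $P^+(3)$, so by the lemma each has dichromatic number at most $2$. The task is then to convert this uniform local $2$-dicolorability into a global constant bound, which is precisely what the nice-set machinery is designed for: one extracts acyclic \emph{nice} sets whose removal controllably decreases the dichromatic number, using the absence of an induced $P^+(3)$ to bound how the pieces interact, and a careful (unoptimised) accounting produces the constant $414$. This last bootstrap — turning local $2$-dicolorability plus the induced-$P^+(3)$ restriction into an absolute bound — is the main obstacle, and is where essentially all the difficulty of the theorem lies.
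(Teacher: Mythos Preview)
Your first bullet takes a genuinely different route from the paper. The paper never reduces to strong components; instead it proves directly that in a $\ova{C_3}$-free oriented complete multipartite graph every vertex lies on $\ova{C_4}$'s contained in at most two parts, and converts this into a $2$-dicoloring via an auxiliary matching on the refined parts $X_i^j$. Your shortcut through ``every strong $c$-partite tournament with $c\ge 3$ contains a $\ova{C_3}$'' does give the same bound $\dic\le 2$ with less work, and the statement is true, but your justification (``finiteness of the resulting domination order produces a source or sink'') is not a proof: once you know a third-part vertex dominates or is dominated by the chosen $\ova{C_4}$, you still have to deal with the remaining vertices of the two parts carrying that $\ova{C_4}$, and the argument does not close in one line. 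If you want to keep this route you should either cite a precise reference or supply a complete proof.

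For the second bullet your outline is only a skeleton. For the triangle-free base case the paper does \emph{not} argue via the directed girth and nice sets; it takes the out-distance layers $L_i$ from a vertex in a strong component and shows each $L_i$ is independent, then $2$-colors by parity. Your ``structural analysis of how the remaining vertices attach to a $\ova{C_4}$'' is left entirely unspecified. More seriously, for the $K_4$-free bootstrap you only say that neighbourhoods are triangle-free and that nice sets should convert local $2$-dicolorability into a global bound. The paper's actual argument needs two nontrivial intermediate lemmas you do not anticipate: first a bound (of $8$) when one additionally forbids $\ova{C_3}$, and then a bound (of $66$) when one forbids a specific $7$-vertex digraph $R$, after showing that the second neighbourhood pieces arising from the nice-set decomposition are automatically $R$-free. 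Only then do these combine to the constant $414$. Your proposal names the obstacle correctly but does not get past it.
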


The second claim of the  theorem is equivalent to stating that the set $\F$ of digraphs consisting of $\ovlra K_2$, $P^+(3)\}$ and all possible orientations of $K_4$ is a heroic set. As there are four non-isomorphic orientations on $K_4$, this is a set of order 6 and, at first look, it may seems that this does not fit into our direction of research which is about characterizing heroic sets of order 3. 
However, using the fact that every tournament on at least $2^{k}$ vertices contains $TT_k$ as an induced subdigraph, one easily observes that  given an oriented forest $F$ the a special case of our conjecture, that ``$\{\ovlra K_2, TT_k, F\}$ is a heroic set for every integer $k$", is equivalent to the following conjecture.

\begin{conjecture}\label{conj:Kk+F}
	Given an oriented forest $F$ and for every integer $l$,  $\{\ovlra K_2, K_k, F\}$ is heroic.
\end{conjecture}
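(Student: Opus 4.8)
The plan is to first reduce the statement to the case of a single oriented \emph{tree}, and then to prove the tree case by induction on the number of vertices, growing the tree one leaf at a time inside a putative oriented graph of enormous dichromatic number. As already observed, we may forbid $TT_k$ in place of all orientations of $K_k$, at the cost of replacing $k$ by $2^k$, so throughout we work in the class $\F(\ovlra K_2, TT_k, \cdot)$.

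The reduction to trees is immediate. Given an oriented forest $F$ with components $T_1,\ldots,T_m$, link the $T_i$ into a single oriented tree $T^*$ by attaching each of them through one new arc to a fresh path, orienting the new arcs arbitrarily. Then $\bigcup_i V(T_i)$ induces exactly $F$ in $T^*$, since every new arc is incident to a fresh vertex; hence $F$ is an induced subdigraph of $T^*$. As induced containment is transitive, every digraph that contains $T^*$ also contains $F$, so $\F(\ovlra K_2, TT_k, F)\subseteq \F(\ovlra K_2, TT_k, T^*)$, and any dichromatic bound for the tree $T^*$ bounds the forest class.

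For the tree case the engine I would use is a degeneracy-type lemma for the dichromatic number: if $\dic(D)\ge d+1$ then $D$ contains a subdigraph of minimum out-degree at least $d$. One deletes any vertex of out-degree less than $d$; if the process empties $D$, then colouring in reverse deletion order with $d+1$ colours, each vertex avoiding the colours of its already-coloured out-neighbours, produces an acyclic colouring (the least-deleted vertex of any monochromatic directed cycle would have a same-coloured forward out-neighbour). Reversing all arcs gives the same statement for in-degrees. A high-out-degree core lets one embed an \emph{out-tree} greedily in BFS order from the root: to attach a child $u$ below an already-placed $w$ along the arc $wu$, one of the at least $d$ out-neighbours of the image of $w$ is still unused. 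This is exactly the situation of the directed path $P^+(3)$, which explains why the second item of Theorem~\ref{thm:bounds} is within reach. Turning this subgraph embedding into an \emph{induced} one is where the paper's ``nice sets'' enter: one passes to a large, homogeneously-behaving subset of the neighbourhood on which $\dic$ stays large, so that the greedy choice can also avoid creating chords.

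The hard part is the induction step in full generality, and it is structural rather than merely technical. First, a general oriented tree mixes the two arc directions, so a core with only large minimum out-degree no longer supplies the required in-neighbours on demand, and one must control in- and out-degrees simultaneously — something large dichromatic number does not obviously provide within a single subdigraph. Second, and more seriously, making the copy induced forces one to fix at once the presence, the absence, \emph{and} the orientation of every arc between each new vertex and the entire existing copy, and a degree-based core offers no mechanism guaranteeing a vertex with the exact in/out pattern that an arbitrary oriented tree demands. This is precisely the obstruction that leaves the unoriented Gy\'arf\'as--Sumner conjecture open already for general trees, and the orientation constraints only compound it. I therefore expect the approach to establish Conjecture~\ref{conj:Kk+F} for oriented trees whose underlying shape is a path, a star, or a broom (together with their orientations), while the statement in full remains, consistent with its status as a conjecture, genuinely open.
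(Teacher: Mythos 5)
There is nothing in the paper to compare your argument against: the statement you were given is Conjecture~\ref{conj:Kk+F}, and the paper offers no proof of it. The paper only observes (via the fact that every tournament on $2^k$ vertices contains $TT_k$) that it is equivalent to the special case of Conjecture~\ref{conj:oriented} in which the hero is a transitive tournament, and it establishes the isolated special cases $\dic(\F(\ovlra K_2, K_3, P^+(3)))=2$ and $\dic(\F(\ovlra K_2, K_4, P^+(3)))\le 414$ by the nice-set method. Your proposal is consistent with this status: it is not a proof and, to your credit, it says so explicitly in its final sentence. So the only thing to assess is whether your partial reductions are sound and whether the obstruction you identify is the real one.

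On that score, most of what you write is correct, with one notable exception. The reduction from forests to trees is fine: attaching the components of $F$ to fresh vertices produces an oriented tree $T^*$ containing $F$ as an induced subdigraph, hence $\F(\ovlra K_2, TT_k, F)\subseteq\F(\ovlra K_2, TT_k, T^*)$ and a bound for $T^*$ bounds the forest class. The degeneracy lemma is also correct. But your first claimed obstruction --- that large dichromatic number does not supply a subdigraph with simultaneously large in- and out-degree --- is not an obstruction at all: run the same deletion process, removing any vertex whose out-degree \emph{or} in-degree in the current digraph is less than $d$. If the process empties $D$, colour in reverse deletion order with $d$ colours, each vertex avoiding the colours of whichever side (already-coloured out-neighbours or already-coloured in-neighbours) has fewer than $d$ vertices; the last-coloured vertex of a monochromatic directed cycle would then have both an in- and an out-neighbour of its own colour among earlier-coloured vertices, a contradiction. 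Hence $\dic(D)\ge d+1$ forces a subdigraph with minimum in- \emph{and} out-degree at least $d$. The genuine gap is precisely your second obstruction: converting a subgraph embedding of an oriented tree into an \emph{induced} embedding, where the presence, absence and orientation of every arc to the already-embedded vertices is prescribed. Neither large minimum degree nor the paper's nice-set machinery (whose applications are tailored to $P^+(3)$ and small cliques) gives any handle on this, which is exactly why the statement remains a conjecture. In short: your assessment of the problem's status is accurate, but what you have produced is a research plan with a correctly located missing step, not a proof; and one of the two difficulties you list is in fact routine.
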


\section{Proof of Theorem~\ref{thm:item2}}\label{sec:SecondItem}

As mentioned earlier, each tournament on at least $2^{k}$ vertices contains $TT_k$. Also, by classical Ramsey theory, for every integers $k_1,k_2,k_3$ there exists an integer $R(k_1,k_2,k_3)$ such that every $3$-coloring with colors $c_1, \, c_2,\, c_3$ of the edges of the complete graph on at least $R(k_1,k_2,k_3)$ vertices contains a monochromatic complete graph on $k_i$ vertices with color $c_i$ for some $i \in \{1,2,3\}$. 
Considering that a digraph is a $3$-coloring of the edges of a complete graph (colors being non-edge, induced arcs and digons), we thus have:
\begin{proposition}
Let $k$, $\alpha$, $t$ be integers. For every $D \in \F(\overleftrightarrow{K_k}, \overline{K}_{\alpha}, TT_t)$, $|V(D)| < R(k,\alpha,2^{t-1})$. In particular $\{\overleftrightarrow{K_k}, \overline{K}_{\alpha}, TT_t\}$ is a heroic set.
\end{proposition}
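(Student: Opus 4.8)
The plan is to reduce the statement to a direct application of the multicolor Ramsey theorem, using the two structural facts recalled just before the proposition. First I would set up the correspondence between a digraph $D$ and a $3$-edge-coloring of the complete graph on $V(D)$: for each unordered pair $\{x,y\}$, color the edge with color $c_1$ if $\{x,y\}$ is a non-edge of the underlying graph (neither $xy$ nor $yx$ is an arc), with color $c_2$ if exactly one of $xy$, $yx$ is an arc (an induced single arc), and with color $c_3$ if both $xy$ and $yx$ are arcs (a digon). This is a genuine $3$-coloring of $E(K_{|V(D)|})$, so Ramsey's theorem applies.

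Next I would interpret the three possible monochromatic cliques. A monochromatic clique in color $c_1$ on $\alpha$ vertices is an induced $\overline{K}_\alpha$ in $D$; a monochromatic clique in color $c_3$ on $k$ vertices is an induced $\ovlra{K_k}$; and a monochromatic clique in color $c_2$ on $s$ vertices is a set of $s$ vertices in which every pair is joined by exactly one arc, i.e.\ an induced tournament on $s$ vertices. The key step is then to choose the Ramsey thresholds so that forbidding all three target subdigraphs forces $|V(D)|$ to be small. Concretely I would invoke $R(k,\alpha,2^{t-1})$: if $|V(D)| \ge R(k,\alpha,2^{t-1})$, then $D$ contains one of an induced $\ovlra{K_k}$, an induced $\overline{K}_\alpha$, or an induced tournament on $2^{t-1}$ vertices.

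The only slightly non-routine point — and the step I expect to be the crux — is handling the third color. Ramsey gives a tournament on $2^{t-1}$ vertices rather than $TT_t$ directly, so I would apply the cited fact that every tournament on at least $2^{t-1}$ vertices contains $TT_t$ as a (necessarily induced, since tournaments have no non-arcs or digons) subdigraph. Hence an induced tournament on $2^{t-1}$ vertices already contains an induced $TT_t$. Combining the three cases, any $D$ with $|V(D)| \ge R(k,\alpha,2^{t-1})$ contains a member of $\{\ovlra{K_k}, \overline{K}_\alpha, TT_t\}$ as an induced subdigraph; contrapositively, every $D \in \F(\ovlra{K_k}, \overline{K}_\alpha, TT_t)$ satisfies $|V(D)| < R(k,\alpha,2^{t-1})$.

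Finally I would conclude heroicity. Since every digraph in $\F(\ovlra{K_k}, \overline{K}_\alpha, TT_t)$ has fewer than $R(k,\alpha,2^{t-1})$ vertices, and the dichromatic number of any digraph is trivially at most its number of vertices, the class has dichromatic number bounded by $R(k,\alpha,2^{t-1})$. Therefore $\{\ovlra{K_k}, \overline{K}_\alpha, TT_t\}$ is a heroic set, as claimed. No real obstacle remains once the edge-coloring dictionary and the $2^{t-1}\to TT_t$ escalation are in place; the argument is essentially a translation of Ramsey's theorem into the digraph language set up earlier in the excerpt.
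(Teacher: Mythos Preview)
Your proposal is correct and follows exactly the paper's approach: encode a digraph as a $3$-edge-coloring of the complete graph (non-edge, single arc, digon), apply the multicolor Ramsey theorem with parameters $(k,\alpha,2^{t-1})$, and use the fact that any tournament on $2^{t-1}$ vertices contains an induced $TT_t$. The paper in fact leaves all of this implicit in the sentence preceding the proposition, so your write-up is simply a fleshed-out version of the same argument.
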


We now give a construction of digraphs with arbitrarily large dichromatic number proving that the set $\{\ovlra K_3, \overline K_2, H\}$ is not heroic as soon as $H$ is not a transitive tournament. 
Consider a symmetric graph $\ovlra G$ where $G$ is a graph of arbitrarily large girth and chromatic number. We fix an arbitrary enumeration $v_1, \dots, v_n$ of the vertices of $\ovlra G$ and create a semi-complete digraph\footnote{A \emph{semi-complete}  is a digraph with an arc or a digon between every pair of vertices} $D$ as follows: if $v_iv_j$, $i<j$, is a non-edge of $\ovlra G$, then $v_iv_j$ is an arc of $D$. Such a construction have arbitrarily large dichromatic number,  and belongs to $\F(\ovlra K_3, \overline K_2, \ova C_3)$. Moreover, Theorem~\ref{thm:hero} easily implies that a hero is either a transitive tournament, or contains a $\ova C_3$, so the set $\{\ovlra K_3, \overline{K_2}, H\}$ is not heroic as soon as $H$ is not a transitive tournament.  

Finally, since for all heroes $H$  the set $\{\ovlra K_2, \overline K_{\alpha}, H\}$ is a heroic (Theorem~\ref{thm:alpha}), Theorem~\ref{thm:item2} follows.


\section{Results about Conjecture~\ref{conj:oriented}}
In what follows, first of all we prove the only if part of Conjecture~\ref{conj:oriented}. Then, toward support for the main direction of this conjecture, we first recall that the case where both conditions are satisfied (i.e., $H$ is a  transitive tournament and $F$ is a union of oriented stars) is already settled in \cite{CSS19}. We then prove the conjecture for specific choices of $H$ and $F$ where only on of the two conditions are satisfied.

\subsection{The only if part}\label{ss:onlyif}
Let $D_1$ be the digraph on one vertex and let $D_{i+1}=C_{4}(D_{i})$.  From Theorem~\ref{coro:C_k(D)} it follows that $\dic(D_i)=i$. Furthermore, it is easily verified that  $D_i \in \F(\ovlra{K_2},\ova C_3, P_4)$ for every $i$.  

By Theorem \ref{thm:hero}, a hero with no $\ova C_3$ is a transitive tournament. Observe that an oriented forest with no induced $P_4$ in its underlying graph is a disjoint unions of oriented stars. This implies that if $H$ is a hero which is not a transitive tournament, and $F$ is a forest which is not a union of oriented stars, then $\F(\ovlra K_2, H)$ contains $\F(\ovlra{K_2},\ova C_3, P_4)$ whose chromatic number is not bounded. This proves the necessary part of Conjecture~\ref{conj:oriented}.

\subsection{The if part : $H=C_{3}$ and $F$ has at most $3$ vertices}\label{ss:ifC3}

Since it is proved that the set is heroic when both conditions hold, we can assume now only one of the two conditions holds. In this subsection we are interested in the case when $H$ is not transitive, which is equivalent to say that it contains a $C_{3}$, so the first step is then $H=C_{3}$ itself. 

If $F$ has only two vertices the question is trivial and on three vertices the possibilities for $F$ are :
\begin{itemize}
\item $\overbar K_{3}$, 
\item $P^{+}(2)$ (the directed path of length $2$)
\item  $\ova K_2 + K_1$, 
\item $S^{+}_{2}$ (the oriented star with two outgoing arcs from the center)
\item $S^{-}_{2}$ (the oriented star with two ingoing arcs to the center)
\end{itemize}
By Theorem \ref{thm:alpha}, $F= \overbar K_{3}$ gives a heroic set. The second item is simple : the digraphs in $\F(\ovlra K_2, \ova C_3, P^{+}(2))$ have dichromatic number $1$, as one can prove easily by induction that they consists of disjoint unions of transitive tournaments. 

\medskip

Let us know prove the case $F=\ova K_2 + K_1$ (the second item of Theorem \ref{thm:bounds}).

\begin{theorem}
 $\dic(\F(\ovlra K_2, \ova C_3, \ova K_2 + K_1))=2$. 
\end{theorem}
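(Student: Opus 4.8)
The plan is to first pin down exactly which digraphs live in this class and then turn the coloring question into a purely structural statement about multipartite tournaments. Since $\ovlra K_2$ is forbidden, every member $D$ is an oriented graph, so ``adjacent'' and ``joined by an arc'' coincide. Forbidding the induced $\ova K_2 + K_1$ says precisely that there is no arc $xy$ together with a vertex $z$ non-adjacent (in the underlying graph $G$) to both $x$ and $y$; equivalently, the complement $\overline G$ has no induced $P_3$. A graph whose complement is $P_3$-free is exactly a complete multipartite graph (its complement being a disjoint union of cliques). Hence $\F(\ovlra K_2, \C{3}, \ova K_2 + K_1)$ is exactly the class of \emph{triangle-free multipartite tournaments}, i.e. orientations of complete multipartite graphs with no directed triangle — the statement announced in the abstract. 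For the lower bound one checks that $\C{4}$ lies in the class (no digon, no $\C{3}$, and every three of its vertices induce a $P^+(2)$ rather than $\ova K_2 + K_1$) and that $\dic(\C{4})=2$, giving $\dic\ge 2$.

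For the upper bound I would reduce to the strongly connected case. Every directed cycle is contained in a single strongly connected component, so coloring each component independently cannot create a monochromatic directed cycle; thus $\dic(D)=\max_S\dic(D[S])$ over the strong components $S$. It therefore suffices to $2$-color a \emph{strongly connected} triangle-free multipartite tournament. The heart of the argument is then the structural Lemma: \emph{a strongly connected triangle-free multipartite tournament has at most two parts.} Granting it, a strong component uses only two parts $P,Q$; coloring $P$ with color $1$ and $Q$ with color $2$ makes each color class an independent set (no arcs inside a part), hence acyclic. Reassembling over all components gives $\dic(D)\le 2$, and with the lower bound $\dic=2$.

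To prove the Lemma I would argue by contradiction from a strong, triangle-free $D$ with at least three parts. First, a shortest directed cycle $C=x_0x_1\cdots x_{\ell-1}$ has $\ell\ge 4$, and for a shortest cycle $x_i$ and $x_{i+2}$ must lie in the \emph{same} part: were they adjacent, one orientation of that chord would shorten $C$ while the other would form a $\C{3}$ with $x_{i+1}$. Hence $C$ alternates between two parts $P,Q$. Any vertex $z$ outside $P\cup Q$ is adjacent to all of $C$, and transitivity of the triangles rules out the configuration ``$z\to x_i$ and $x_{i+1}\to z$'' (it would yield the triangle $x_i\to x_{i+1}\to z\to x_i$); propagating around $C$ forces $z$ to dominate all of $C$ or to be dominated by all of $C$. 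Writing $Z^+$ and $Z^-$ for these two sets, a further triangle argument shows that every arc between $Z^+$ and $Z^-$ runs from $Z^+$ to $Z^-$, so the arcs between the three groups are layered as $Z^+\to C\to Z^-$ with $Z^+\to Z^-$. Such a layering cannot coexist with strong connectivity, the desired contradiction.

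The step I expect to be the main obstacle is making this last layering argument fully rigorous, since the vertices of $P$ and $Q$ that do \emph{not} lie on $C$ are adjacent to only one side of $C$ and so escape the clean $Z^+/Z^-$ dichotomy; controlling their arcs is exactly the delicate bookkeeping. A clean way to bypass this is to invoke the classical theorem on multipartite tournaments that every strongly connected $c$-partite tournament with $c\ge 3$ contains a directed triangle, which yields the Lemma at once; I would either cite it or reprove precisely the special case needed via the shortest-cycle/domination analysis above.
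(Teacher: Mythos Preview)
Your approach is correct and genuinely different from the paper's. You reduce to strong components and then invoke the structural Lemma that a strongly connected $\ova C_3$-free multipartite tournament touches at most two parts; the $2$-coloring by parts is then immediate. That Lemma is indeed true and is a consequence of classical results on multipartite tournaments (a vertex of a $c$-partite tournament with $c\ge 3$ lies on a $3$-cycle if and only if it lies on any cycle), so your plan to cite it is legitimate. Your own sketch of a proof is accurate up to the point where you flag it: the $Z^+/Z^-$ layering is exactly right for vertices in third parts, but the vertices of $P\cup Q$ off the shortest $\ova C_4$ genuinely need extra work, and this is where a self-contained argument becomes delicate.

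The paper does \emph{not} reduce to strong components and does not appeal to any outside theorem. Instead it works globally: having observed (as you do) that every induced directed cycle is a $\ova C_4$ sitting inside two parts, it proves a sharper local statement --- for each fixed vertex $a$, \emph{all} $\ova C_4$'s through $a$ lie in $X_i\cup X_j$ for one fixed pair $(i,j)$ depending only on $a$. This is shown by a short case analysis: two $\ova C_4$'s through $a$ using distinct ``second'' parts would force a $\ova C_3$. Each part $X_i$ is then refined into subsets $X_i^j$ according to the partner part $X_j$, and the auxiliary graph on the pieces $X_i^j$ (with edges $X_i^j X_j^i$) is a matching plus isolated vertices; its proper $2$-coloring lifts to a $\ova C_4$-free $2$-dicoloring of $D$.

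Your route is cleaner conceptually and explains at once \emph{why} two colors suffice (each strong piece is essentially bipartite), at the price of importing a nontrivial fact about multipartite tournaments. The paper's route is fully self-contained and yields a strictly finer structural statement (the ``single partner part'' claim), which may be useful elsewhere, but pays for it with more case analysis.
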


\begin{proof}
Let $D \in \F(\ovlra K_2, \ova C_3,  \ova K_2 + K_1)$. 
Observe that no induced $\ova K_2 + K_1)$ in an oriented graph implies that the set of non-neighbors of a vertex $x$ form an independents set. Thus non-adjacency is an equivalence relation on the set of vertices. In other words an oriented graph with no induced $\ova K_2 + K_1)$, and, in particular, our digraph $D$  is an orientation of a complete multipartite graph.
Furthermore, since $D$ has neither $\ovlra K_2$ nor $\ova C_3$ as induced subdigraph, all its induced directed cycles have length $4$ and all its $\ova C_4$'s are induced.  Hence, an acyclic coloring of $D$ is the same as a $\ova C_4$-free coloring, that is a coloring of the vertices such that no $\ova C_4$ is monochromatic.

Hence, we can look for a $\ova C_4$-free coloring with two colors. The proof is based on the following claim that says that all $\ova C_4$ containing a fixed vertex are included in the union of two parts of $D$.   

{\bf Claim:}
\emph{Let $a_1$ be a vertex of $D$. 
There exists $1 \le i \neq j \le n$ such that all $\ova C_4$ containing $a_1$ are included in $X_i \cup X_j$. }

Let  $a_1 \in X_i$ and assume $a_1$ is contained in a $\ova C_4$: $a_1\ra b_1\ra a_2 \ra b_2 \ra a_1$. Since this $\ova C_4$ is induced, and as the underlying graph of $D$ is a complete multipartite graph, we may assume that  $a_2 \in X_i$ and $b_1,b_2$ belong to a same part, say $X_j$, $j \neq i$. 
Assume now that $a_1$ belongs to another $\ova C_4$, say $a_1 \ra c_1 \ra a_3 \ra c_2$. 
Similarly, $a_3 \in X_i$ and $c_1,c_2$ are in a same part. 
Assume for contradiction that $c_1,c_2 \in X_k$ with $k \neq j$. In particular $b_1$ and $b_2$ are adjacent with both $c_1$ and $c_2$. 

If $a_2=a_3$, then either  $c_1 \ra b_2$ and $\{c_1, b_2, a_1\}$ induces a $\ova C_3$, or $b_2 \ra c_1$ and $\{b_2, c_1, a_2\}$ induces a $\ova C_3$, a contradiction in both cases. 

So $a_2 \neq a_3$. 
We then have:
\begin{itemize}
\item  $b_2 \ra c_1$, as otherwise $\{b_2,a_1,c_1\}$ induces a $\ova C_3$.
\item  $a_2 \ra c_1$, as otherwise $\{a_2,b_2,c_1\}$ induces a $\ova C_3$.
\item  $c_2 \ra b_1$, as otherwise $\{c_2,a_1,b_1\}$ induces a $\ova C_3$.
\item  $a_3 \ra b_1$, as otherwise $\{a_3,c_2,b_1\}$ induces a $\ova C_3$.
\end{itemize}
Now, if $b_1 \ra c_1$, then $\{b_1,c_1,a_3\}$ induces a $\ova C_3$ and if $c_1 \ra b_1$, then $\{b_1,c_1,a_2\}$ is a $\ova C_3$, a contradiction in both cases. This complete the proof of the claim.
\medskip

We can now partition each $X_i$  into $n$ subparts as follow. 
For $i,j=1, \dots, n$, $i \neq j$, define $X_i^j$ as the set of vertices in $X_i$ that are involved in $\ova C_4$ only with some vertices of $X_i \cup X_j$ and let $X_i^i$ the remaining set of vertices $X_i$ (those not in any $\ova C_4$). 

Build an auxiliary graph with vertices $x_i^j$ (representing the set $X_i^j$),   and put an edge between $x_i^j$ and $x_j^i$ for $1 \le i\neq j \le n$. It follows from the claim that each $X_i^j$ is adjacent to at most one vertex, that $X_j^i$ when $i\neq j$. Thus this graphs is disjoint union of $K_1$'s and $K_2$'s and, therefore, can be properly 2-colored. Take a $2$ coloring of it. Then, giving to all vertices of  $X_i^j$ the color of $x_i^j$, we obtain a  $\ova C_4$-free coloring of $D$ with $2$ colors. 
\end{proof}

To conclude the discussion started at the beginning of this section, we note that the next case where $H=\ova{C_3}$ and $F=S^{+}_{2}$ is already unsettled and we do not know if $\{\ovlra{K_2}, \ova{C_3},S^{+}_{2}\}$ is a heroic set, however we conjecture that:
 
\begin{conjecture}
$\dic(\F(\ovlra{K_2}, \ova{C_3},S^{+}_{2} ))=2$
\end{conjecture}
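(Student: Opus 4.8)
The plan is to prove that every digraph $D \in \F(\ovlra K_2, \ova C_3, \ova K_2 + K_1)$ is $2$-dicolorable. First I would extract the structural consequences of the three forbidden configurations, exactly as one sets up the problem. Forbidding $\ova K_2 + K_1$ as an induced subdigraph forces non-adjacency to be transitive, so the underlying graph of $D$ is complete multipartite with parts $X_1, \dots, X_n$; forbidding $\ovlra K_2$ means $D$ is an oriented graph; and forbidding $\ova C_3$ means every induced directed cycle has length at least $4$. Since the underlying graph is complete multipartite, any directed cycle of length $\ge 5$ has a chord (two vertices in different parts that are non-consecutive on the cycle are adjacent), and one checks that such a chord, together with the $\ova C_3$-freeness, forces a shorter directed cycle; thus all induced directed cycles are $\ova C_4$'s, and every $\ova C_4$ is induced. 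Consequently a dicoloring of $D$ is precisely a coloring in which no $\ova C_4$ is monochromatic, so I would reduce the whole problem to finding a $\ova C_4$-free $2$-coloring.

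The heart of the argument is a locality claim: for each vertex $a_1 \in X_i$, there is a single other part $X_j$ such that \emph{every} $\ova C_4$ through $a_1$ lives inside $X_i \cup X_j$. I would prove this by contradiction. Given one $\ova C_4$ through $a_1$, say $a_1 \ra b_1 \ra a_2 \ra b_2 \ra a_1$, the fact that it is induced in a complete multipartite graph forces $a_2 \in X_i$ and $b_1, b_2 \in X_j$ for some $j \neq i$. Suppose a second $\ova C_4$ through $a_1$ used a different part $X_k$, giving $a_1 \ra c_1 \ra a_3 \ra c_2 \ra a_1$ with $a_3 \in X_i$ and $c_1, c_2 \in X_k$, $k \neq j$. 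Then $b_1, b_2$ are each adjacent to $c_1, c_2$, and I would orient all the resulting cross arcs by repeatedly invoking $\ova C_3$-freeness: each time two of these vertices together with a common out/in-neighbor would close a directed triangle, the arc is forced the other way. Handling the degenerate case $a_2 = a_3$ separately (where a single forced arc already closes a $\ova C_3$ with $a_1$ or $a_2$) and then the generic case $a_2 \neq a_3$, one arrives at a forced $\ova C_3$ on $\{b_1, c_1, a_3\}$ or $\{b_1, c_1, a_2\}$, the contradiction that establishes the claim.

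With the claim in hand, the coloring is combinatorial. I would refine each part $X_i$ into subparts $X_i^j$ ($j \neq i$), where $X_i^j$ collects the vertices of $X_i$ that appear in a $\ova C_4$ only with vertices of $X_i \cup X_j$, together with a leftover class $X_i^i$ of vertices lying in no $\ova C_4$ at all; the claim guarantees this is a well-defined partition. Then I would build an auxiliary graph on the symbols $x_i^j$ with an edge between $x_i^j$ and $x_j^i$ whenever $i \neq j$. By the claim each subpart interacts (through $\ova C_4$'s) with at most one other subpart, so this auxiliary graph is a disjoint union of edges and isolated vertices, hence bipartite; a proper $2$-coloring of it, pushed back to assign each $X_i^j$ the color of $x_i^j$, yields a $\ova C_4$-free $2$-coloring of $D$, because every $\ova C_4$ is split across a matched pair $X_i^j, X_j^i$ that receive different colors. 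The lower bound $\dic \geq 2$ is immediate since the class contains a $\ova C_4$.

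I expect the main obstacle to be the locality claim, and specifically the exhaustive arc-forcing in the case $a_2 \neq a_3$: one must be careful that the six relevant cross pairs among $\{a_2, a_3, b_1, b_2, c_1, c_2\}$ are all genuinely adjacent (they lie in distinct parts) and that the $\ova C_3$-forcing is applied in a consistent order so that no orientation is both forced and contradicted prematurely. Everything downstream of the claim is routine graph-theoretic bookkeeping.
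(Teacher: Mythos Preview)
You have proved the wrong statement. The conjecture concerns $S^{+}_{2}$, the oriented star with two out-arcs from a center, whereas your entire argument is written for $\ova K_2 + K_1$ (an arc together with an isolated vertex). These are not the same three-vertex digraph, and the paper itself states explicitly that the $S^{+}_{2}$ case is open; there is no proof in the paper to compare your attempt against.

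The mismatch is fatal at the very first structural step. Forbidding $\ova K_2 + K_1$ does force non-adjacency to be an equivalence relation and hence a complete multipartite underlying graph, but forbidding $S^{+}_{2}$ does not. A trivial counterexample: the directed path $v_1\ra v_2\ra v_3\ra v_4$ contains no induced $S^{+}_{2}$ (every vertex has out-degree at most one), yet its underlying graph is $P_4$, not complete multipartite. Consequently the reduction to ``all induced directed cycles are $\ova C_4$'s'' collapses, and everything built on it---the locality claim about $\ova C_4$'s through a fixed vertex, the refinement into subparts $X_i^j$, the auxiliary matching graph---no longer applies. What you have written is, essentially verbatim, the paper's own proof of the earlier theorem $\dic(\F(\ovlra K_2, \ova C_3, \ova K_2 + K_1))=2$; it is correct for that theorem, but it says nothing about $\F(\ovlra K_2, \ova C_3, S^{+}_{2})$.
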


To support this, we point out that $\dic(\F(\ovlra{K_2},\ova{C_3}, S^{+}_{2},S^{-}_{2}))=2$. Indeed, in \cite{BG01} it is proven that the strongly connected element of $\F(\ovlra{K_2}, \ova{C_3}, S^{+}_{2},S^{-}_{2})$ is the class of so called {\em round digraphs}: a digraph is round if its vertices can be ordered cyclically $(v_1, v_2, \ldots v_n)$ such that whenever $v_iv_j$ is an arc, then for any $i<k<j$, $v_{i}v_{k}$ and $v_{k}v_{j}$ are both arcs (indices are taken modulo $n$). It is easy to see that round digraphs have dichromatic number $2$: consider the longest arc on the cyclic order, assume w.l.o.g. that it is $v_{1}v_{k}$, and then observe that $(\{v_{i},\, i\leq k\},\{v_{i},\, i> k\}$ is a partition into two acyclic digraphs.

\subsection{The if part : $H=TT_{k}$ and $F$ is an orientation of $P_{3}$}\label{ss:ifTT}
In this subsection we consider the case where $H$ {\em is} a transitive tournament and $F$ is an oriented forest {\em but not} a disjoint union of oriented stars. The smallest non trivial case is thus when $F$ is an orientation of a path on 4 vertices. 

Given the path $P$ on vertices  $v_1,v_2, \ldots, v_n$ where $v_{i}v_{i+1}$ are the edges, an orientation of $P$ can be coded by starting with a sign ($+$ or $-$) which decides the orientation of the first edge ($v_1v_2$) followed by a sequence of numbers, first of which tells the number of consecutive arcs in the same direction starting at $v_1$, then number of consecutive arcs at the opposite direction and so on. For example $P^{+}(3,4)$ is an orientation of a path of length 7 (8 vertices) where first 3 arcs are directed away from $v_1$ and last four are directed toward $v_1$. Using this terminology, the four orientations of the path on 4 vertices are represented on Figure~\ref{fig:orP4}.

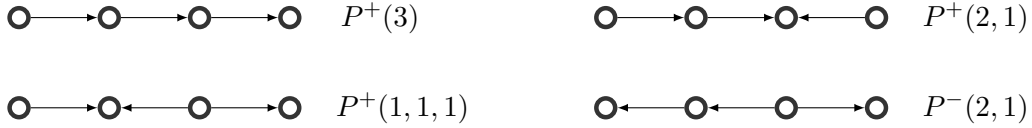
\begin{figure}[!hbtp]
\centering
 \begin{tikzpicture}[scale=0.6]
\node[draw,circle] (1) at (0,0) {};
\node[draw,circle] (2) at (2,0) {};
\node[draw,circle] (3) at (4,0) {};
\node[draw,circle] (4) at (6,0) {};

\draw[->,>=latex] (1) -- (2);
\draw[->,>=latex] (2) -- (3);
\draw[->,>=latex] (3) -- (4);

\draw (8,0) node[draw=none] {$P^+(3)$};

\begin{scope}[xshift=13cm]
\node[draw,circle] (1) at (0,0) {};
\node[draw,circle] (2) at (2,0) {};
\node[draw,circle] (3) at (4,0) {};
\node[draw,circle] (4) at (6,0) {};

\draw[->,>=latex] (1) -- (2);
\draw[->,>=latex] (2) -- (3);
\draw[->,>=latex] (4) -- (3);

\draw (8.2,0) node[draw=none] {$P^+(2,1)$};
\end{scope}

\begin{scope}[yshift=-2cm, xshift=13cm]
\node[draw,circle] (1) at (0,0) {};
\node[draw,circle] (2) at (2,0) {};
\node[draw,circle] (3) at (4,0) {};
\node[draw,circle] (4) at (6,0) {};

\draw[->,>=latex] (2) -- (1);
\draw[->,>=latex] (3) -- (2);
\draw[->,>=latex] (3) -- (4);

\draw (8.2,0) node[draw=none] {$P^-(2,1)$};
\end{scope}

\begin{scope}[yshift=-2cm]
\node[draw,circle] (1) at (0,0) {};
\node[draw,circle] (2) at (2,0) {};
\node[draw,circle] (3) at (4,0) {};
\node[draw,circle] (4) at (6,0) {};

\draw[->,>=latex] (1) -- (2);
\draw[->,>=latex] (3) -- (2);
\draw[->,>=latex] (3) -- (4);

\draw (8.5,0) node[draw=none] {$P^+(1,1,1)$};
\end{scope}

\end{tikzpicture}
  \caption{The four orientations of $P_4$}
    \label{fig:orP4}
\end{figure}

The chromatic number of the classes of oriented graphs where an orientation of $P_4$ is forbidden has been already studied. On the positive side, Chudnovsky et al \cite{CSS19} proved that $\F (\ovlra{K_2}, TT_k, P^+(2,1))$ has bounded chromatic number for every $k$, which implies that $\{\ovlra{K_2}, TT_k, P^+(2,1)\}$ is heroic (which by reversal of the arcs trivially implies the same for $\{\ovlra{K_2}, TT_k, P^-(2,1)\})$.

On the negative side, it is proved in respectively~\cite{ErHa76} and~\cite{KiTr92} that $\F(\ovlra K_2, K_3, P^+(1,1,1))$ and $\F(\ovlra K_2, K_3, P^+(3))$ have unbounded chromatic number (recall that forbidding $K_{3}$ here means we forbid both $TT_{3}$ and $\C{3}$, which is even stronger). 

We will prove that $\F(\ovlra K_2, K_3, P^+(3))$ has bounded dichromatic number. The rest of the subsection is dedicated to the proofs of $\dic(\F(\ovlra{K_2}, K_3, P^+(3)))=2$ and $\dic(\F(\ovlra{K_2}, K_4, P^+(3)))\leq 8$, but before we do so we need to introduce the notion of nice sets that will be the key tool of our proofs. 


\begin{definition}
Let $D$ be a digraph. A nonempty set of vertices $S$ of $D$  is said to be \emph{nice} if  each vertex in $S$ either  has no  out-neighbor  in $V(D) \sm S$ or  has no in-neighbor in $V(D) \sm S$. 
The  set of vertices in $S$ with no out-neighbor in $V(D) \sm S$ is called the \emph{in-part of $S$}, and the  set of vertices in $S$ with no  in-neighbor in $V(D) \sm S$ is the \emph{out-part of $S$}. 
\end{definition}

The next lemma gives a sufficient condition for a class of digraphs to have bounded dichromatic number. 

\begin{lemma}\label{lem:niceSet}
Let $\mc C$ be a hereditary class of digraphs. Assume that there exists two integers $c_1$ and $c_2$ such that every digraph in $\mc C$ contains a nice set $S$ such that the  in-part of $S$ has dichromatic number at most $c_1$ and its out-part has dichromatic number at most $c_2$.   Then $\dic(\mc C) \le c_1+c_2$. In particular, if there exists $c$ such that every digraph in $\mc C$ admits a nice set with dichromatic number at most $c$, then $\dic(\mc C) \le 2c$. 
\end{lemma}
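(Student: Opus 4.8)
The plan is to proceed by induction on $|V(D)|$, producing for every $D \in \mc C$ a dicoloring with a fixed palette of $c_1+c_2$ colours split as a disjoint union $A \sqcup B$ with $|A|=c_1$ and $|B|=c_2$. First I would invoke the hypothesis to extract a nice set $S$ of $D$, write $I$ for its in-part and $O$ for its out-part, and record the two structural facts used throughout: every vertex of $I$ has all of its out-neighbours inside $S$, and every vertex of $O$ has all of its in-neighbours inside $S$. Since $S$ is nice, $S = I \cup O$.

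Next I would colour $S$. Because $\dic(D[I]) \le c_1$, I can dicolour $I$ using only colours of $A$; because $\dic(D[O]) \le c_2$ and the dichromatic number does not increase when passing to an induced subdigraph, I can dicolour $O \sm I$ (an induced subdigraph of $D[O]$) using only colours of $B$. Assigning each vertex of $I$ its $A$-colour and each vertex of $O \sm I$ its $B$-colour yields a colouring of $S$ in which the $A$-coloured vertices of $S$ are \emph{exactly} $I$ and the $B$-coloured vertices are \emph{exactly} $O \sm I$; the only bookkeeping point is to handle the possible overlap $I \cap O$ by defaulting those vertices to $A$. Then, since $\mc C$ is hereditary, $D - S \in \mc C$, so by induction it admits a dicoloring using the full palette $A \sqcup B$, and I would take the union of this colouring with the colouring of $S$ just defined.

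The core of the argument — and the step I expect to be the main obstacle to phrase cleanly — is checking that the combined colouring has no monochromatic directed cycle. A monochromatic cycle contained in $S$ must lie in $D[I]$ (if its colour is in $A$) or in $D[O \sm I]$ (if its colour is in $B$), and each of these is dicoloured, so no such cycle exists; a monochromatic cycle disjoint from $S$ contradicts the inductive dicoloring of $D - S$. The remaining case is a monochromatic directed cycle $\mc Z$ that crosses the boundary of $S$, and here niceness is used decisively: traversing $\mc Z$, there is an arc entering $S$, whose head lies in $S$ yet has an in-neighbour outside $S$, hence lies in $I$ and is coloured from $A$; symmetrically there is an arc leaving $S$, whose tail lies in $S$ yet has an out-neighbour outside $S$, hence lies in $O \sm I$ and is coloured from $B$. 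As $\mc Z$ is monochromatic, its colour would belong to $A \cap B = \emptyset$, a contradiction.

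Having ruled out all three cases, the combined colouring is a dicoloring of $D$ with $c_1+c_2$ colours, completing the induction and giving $\dic(\mc C) \le c_1+c_2$. The final ``in particular'' clause is simply the special case $c_1 = c_2 = c$, yielding $\dic(\mc C) \le 2c$.
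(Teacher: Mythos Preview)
Your proof is correct and follows essentially the same approach as the paper's: induction (phrased there as a minimal counterexample), dicolour $D-S$ by induction, dicolour the in-part and out-part of $S$ with disjoint palettes, and rule out boundary-crossing monochromatic cycles via the observation that such a cycle must enter $S$ through a vertex of $I$ and leave through a vertex of $O\setminus I$. You are in fact more careful than the paper in explicitly handling the possible overlap $I\cap O$ and in spelling out the three cases for a putative monochromatic cycle.
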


\begin{proof}
Let $\mc C$ be a  class of digraph as in the statement. Let $D \in \mc C$ be a minimal counter example, that is: $\dic(D)=c_1+c_2+1$ and for every proper  subdigraph $H$ of $D$, $\dic(H)\le c_1+c_2$. 
By hypothesis, $D$ admits a nice set $S$, with in-part $S_1$  and out-part $S_2$ such that $\dic(S_1)\leq c_1$ and $\dic(S_2)\leq c_2$. 

The key observation is that any directed cycle that intersects $S$ and $V(D) \sm S$ must intersect  both $S_1$ and $S_2$.
Hence, by the minimality of $D$, we can dicolor the subdigraph of $D$ induced by $V(D) \sm S$ with $c_1+c_2$ colors. We can then extend this dicoloring to $D$ by using colors $1, \dots, c_1$ for $S_1$ and $c_1+1, \dots, c_1 + c_2$ for $S_2$. 
\end{proof}

Let $D$ be an oriented graph and let $x,y$ be two vertices of $D$. 
The \emph{distance} between   $x$ and $y$ is the distance between $x$ and $y$ in the underlying graph of $D$. 
The \emph{out-distance} from   $x$ to $y$ is the length of a shortest directed path from $x$ to $y$. 
The \emph{in-distance} from  $x$ to $y$ is the length of a shortest directed path from $y$ to $x$. 

\begin{theorem}\label{thm:K3P3}
 $\dic(\F(\ovlra{K_2}, K_3, P^+(3))=2$.  
\end{theorem}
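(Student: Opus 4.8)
The plan is to invoke the nice-set lemma (Lemma~\ref{lem:niceSet}) in its ``in particular'' form with $c=1$: I will argue that every digraph in the class admits a proper nice set $S$ inducing an \emph{acyclic} subdigraph, so that both its in-part and its out-part (being subsets of $S$) are acyclic, and the lemma then yields $\dic\le 2c=2$. Two preliminary reductions make the target concrete. First, since every directed cycle lies inside a single strongly connected component, $\dic(D)=\max_K \dic(D[K])$ over the strong components $K$, so I may assume $D$ is strongly connected (and if it has no cycle then $\dic(D)=1$ and we are done). Second, within this class ``acyclic'' coincides with ``$\C{4}$-free'': a shortest directed cycle is necessarily chordless, since a chord would short-cut it to a shorter directed cycle, and a chordless directed cycle in a digon-free, triangle-free, $P^+(3)$-free oriented graph must have length exactly $4$ — lengths $2$ and $3$ are excluded by the absence of $\ovlra{K_2}$ and $K_3$, while four consecutive vertices of a chordless $\C{\ell}$ with $\ell\ge 5$ would induce a $P^+(3)$. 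The same computation shows $\C{4}$ itself lies in the class and has $\dic=2$, which supplies the lower bound; it remains to prove $\dic\le 2$.

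Recall that $S$ is nice exactly when no vertex of $S$ has simultaneously an in-neighbour and an out-neighbour outside $S$; the in-part collects the vertices with all out-neighbours inside $S$, the out-part those with all in-neighbours inside. So I am looking for a proper vertex set $S$ whose crossing arcs are ``unidirectional at each boundary vertex'' and which contains no $\C{4}$. The two local structural facts I will exploit are: (i) triangle-freeness forces $N^+(v)$ and $N^-(v)$ to be independent for every $v$; and (ii) $P^+(3)$-freeness forces that whenever $v\ra a\ra b\ra c$ is a directed path, $c$ is adjacent to $v$, and moreover either $v\ra c$ or else $v\ra a\ra b\ra c\ra v$ is a $\C{4}$. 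Fact (ii) says that directed walks ``fold back'' onto the neighbourhood of their origin after three steps, which is the mechanism I will use to control how arcs and copies of $\C{4}$ straddle the boundary of a candidate set.

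Concretely, I would root the digraph at a carefully chosen vertex $r$ (for instance a vertex lying on a shortest directed cycle, or one extremal for the out-distance order — which is presumably why out- and in-distance are introduced just before the statement) and define $S$ from the directed-distance layers around $r$, tentatively a bottom slice made of the first one or two out-distance layers together with the in-neighbours forced to accompany them. The aim is to choose the cut so that, by (i)--(ii), every arc crossing $\partial S$ leaves $S$ only from its out-part and enters $S$ only into its in-part (niceness), while $S$ is kept thin enough to be $\C{4}$-free (acyclicity). After peeling off such an $S$ the remaining digraph is again in the hereditary class, so the minimal-counterexample argument already built into Lemma~\ref{lem:niceSet} closes the induction.

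The main obstacle is precisely this last construction: converting the local facts (i)--(ii) into a global guarantee that some distance-based set is \emph{at once} nice and $\C{4}$-free. The delicate point is that a $\C{4}$ may spread its four vertices across several distance layers — this is already visible in small strongly connected examples — so no single layer is automatically $\C{4}$-free, and symmetrically a naive cut can leave a boundary vertex with neighbours escaping $S$ on both sides, destroying niceness. I expect the real work to be a case analysis, driven by fact (ii), of how a $\C{4}$ meeting both $S$ and its complement can sit relative to $\partial S$, used either to enlarge $S$ minimally so that the offending $\C{4}$ is absorbed, or to derive a forbidden configuration; pinning down $r$ and the exact layers so that this process terminates is where the argument must be made robust.
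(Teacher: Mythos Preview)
Your proposal is not a proof: you set up the right objects (out-distance layers from a root) and isolate exactly the right local facts (i) and (ii), but you explicitly leave the construction of the nice set open, and that is where the whole content lies. The paper's argument closes this with a structural statement that makes the nice-set lemma unnecessary for this theorem.

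Rather than searching for one acyclic nice slice, the paper proves that \emph{every} out-distance layer $L_i$ from an arbitrary root $x$ is an independent set. The proof is a short induction using precisely your facts. The base $L_1=N^+(x)$ is independent by triangle-freeness. For the step, suppose $L_1,\dots,L_k$ are independent and $a\ra b$ is an arc inside $L_{k+1}$. Take $a_2\ra a_1\ra a$ with $a_i\in L_{k+1-i}$; triangle-freeness and the layer structure kill all chords on $a_2,a_1,a,b$ except that they force $b\ra a_2$ (your fact (ii)). Now take $b_2\ra b_1\ra b$ with $b_i\in L_{k+1-i}$; triangle-freeness gives $b_1\neq a_1$, $b_2\neq a_2$, and $b_1a_2$ a non-edge, while the inductive independence of $L_{k-1}$ makes $a_2b_2$ a non-edge. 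Then $b_2\ra b_1\ra b\ra a_2$ is an induced $P^+(3)$, a contradiction. Once all layers are independent, colour by the parity of the out-distance: within a colour class every arc goes from some $L_i$ to some $L_j$ with $j\le i+1$, $j\equiv i\pmod 2$, $j\neq i$, hence $j<i$; so each colour class is acyclic.

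The gap in your plan is therefore not a technicality to be dispatched by case analysis. A ``bottom slice'' $L_0\cup\cdots\cup L_j$ is in general not nice: a boundary vertex $v\in L_j$ may have an out-neighbour in $L_{j+1}$ and simultaneously an in-neighbour coming from a deeper layer $L_m$ with $m>j$, since nothing forbids back-arcs into $L_j$; this already fails on small examples and does not disappear even after one proves the layers are independent. The insight you are missing is the global independence of the layers, and once you have it the direct parity colouring is shorter than any nice-set detour.
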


\begin{proof}
Let $D \in \F(\ovlra{K_2}, K_3, P^+(3))$. Assume $D$ is strongly connected (otherwise just take the strong connected component with largest dichromatic number). 
Let $x \in V(D)$.  
For $i \geq 0$, set $L_i$ to be the set of vertices at out-distance $i$ from $x$. Since $D$ is strongly connected, the collection of $L_i$'s is a partition of $V(D)$. 
We are going to prove that each layer induces a stable set. 
Let $k$ be the maximum integer such that each $L_i$ is a stable set for  $i=1, \dots, k$. 
Since $D$ is $K_3$-free, $L_1$ is a stable set, so $k \ge 1$. 
If $L_{k+1}$ is empty, we are done. So assume $L_{k+1}$ is not empty, and by maximality of $k$, $L_{k+1}$ contains an arc $ab$. 
There exists $a_1 \in L_{k}$ and $a_2 \in L_{k-1}$ such that $a_2 \ra a_1 \ra a$. 
Since $a_2\ra a_1 \ra a \ra b$ cannot be induced and $a_1$ and $b$ are non-adjacent (because $D$ is triangle-free),  $b \ra a_2$. 
There exists $b_1 \in L_{k}$ and $b_2 \in L_{k-1}$ such that $b_2 \ra b_1 \ra b$.  Since $D$ is $K_3$-free, $b_1 \neq a_1$, $b_2 \neq a_2$ and $b_1$ is not adjacent with $a_2$. 
Moreover, since $L_{k-1}$ is a stable set, $a_2$ is not adjacent with  $b_2$. Hence $b_2b_1ba_2$ is an induced $P^{+}(3)$, a contradiction. 

Now,  color every vertex at odd out-distance from $x$ with color 1, every vertex at even out-distance from $x$ with color $2$ and $x$ with color $2$. It is easy to check that this gives a proper dicoloring.
\end{proof}

Our next goal is to prove that $\{\ovlra{K_2}, K_4, P^+(3)\}$ is heroic. In order to do so, we first prove that two larger sets are heroic, namely  
$\{\ovlra{K_2}, K_4, P^+(3), \ova{C_3}\}$ (see Lemma~\ref{lem:P3C3}) and  
$\{\ovlra{K_2}, K_4, P^+(3), R\}$, where $R$ is the graph depicted in Figure~\ref{figR} (Lemma~\ref{lem:Rfree}). 

\begin{lemma}\label{lem:P3C3}
$\dic(\F(\ovlra{K_2}, K_4, P^+(3), \ova{C_3})) \le 8$ 
\end{lemma}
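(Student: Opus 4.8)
The plan is to reduce everything to Lemma~\ref{lem:niceSet}: I will show that every $D \in \F(\ovlra{K_2}, K_4, P^+(3), \C{3})$ contains a nice set $S$ with $\dic(D[S]) \le 4$, which by Lemma~\ref{lem:niceSet} immediately gives $\dic(D) \le 2\cdot 4 = 8$. Since every directed cycle is contained in a single strong component and the class is hereditary, I may assume $D$ is strongly connected, so the whole task is to locate one nice set of bounded dichromatic number.

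Two structural facts will supply the bounded pieces. First, for every vertex $v$ the out-neighborhood $N^+(v)$ induces a triangle-free oriented graph, because a triangle inside $N^+(v)$ together with $v$ would be a $K_4$; as an induced subdigraph it also inherits the absence of $P^+(3)$ and of $\C{3}$, so Theorem~\ref{thm:K3P3} gives $\dic(D[N^+(v)]) \le 2$, and symmetrically $\dic(D[N^-(v)]) \le 2$. Second, the absence of $\C{3}$ forces every arc between $N^-(v)$ and $N^+(v)$ to point from $N^-(v)$ to $N^+(v)$, since an arc the other way would close a directed triangle through $v$. Combining these with the general inequality $\dic(A \cup B) \le \dic(A) + \dic(B)$, the closed neighborhood $N[v] = \{v\}\cup N^-(v)\cup N^+(v)$ satisfies $\dic(D[N[v]]) \le 4$, and the one-way property above splits it cleanly into a candidate in-part $\{v\}\cup N^-(v)$ and out-part $N^+(v)$. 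This is exactly where the constant $4$, and hence the final bound $8$, comes from.

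It remains to choose $v = x$ so that $S = N[x]$ is actually \emph{nice}, and this is the main obstacle. Niceness requires that every vertex of $N^+(x)$ have no in-neighbor outside $S$, and dually that every vertex of $N^-(x)$ have no out-neighbor outside $S$; equivalently, no vertex lying outside $N[x]$ may send an arc \emph{into} $N^+(x)$ (and, by reversal, receive an arc from $N^-(x)$). I would prove this by contradiction and with an extremal choice of $x$: given such an external arc into some $a\in N^+(x)$, I would combine it with the arc $x\to a$, a shortest path realizing the relevant distances, and a forward arc leaving the attacked vertex to produce a directed path on four vertices; its only possible chords correspond to transitive triangles, and $K_4$-freeness together with $\C{3}$-freeness rule those out, leaving a forbidden induced $P^+(3)$. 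The delicate point is that long back-arcs genuinely occur in this class—already $\C{4}$ sends an arc back to its root—so the argument must pin down precisely which landings are incompatible with $P^+(3)$-freeness rather than forbidding back-arcs wholesale, and the extremal choice of $x$ is what guarantees the offending forward/backward arcs needed to form the path. Once niceness is verified, $\dic(D[S]) \le 4$ and Lemma~\ref{lem:niceSet} conclude the proof.
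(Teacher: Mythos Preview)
Your overall plan (find a nice set of dichromatic number at most $4$ and invoke Lemma~\ref{lem:niceSet}) is exactly the paper's plan, and your argument that $\dic(D[N[x]])\le 4$ is fine. The gap is in the choice of the nice set: taking $S=N[x]$ is too small, and no ``extremal choice of $x$'' can repair this, because there are vertex-transitive digraphs in the class for which $N[x]$ is never nice.

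Concretely, let $D=C_4(\overbar{K_2})$: four independent pairs $A_1,A_2,A_3,A_4$ with every vertex of $A_i$ seeing every vertex of $A_{i+1}$ (indices mod $4$). This oriented graph is $K_4$-free (it is triangle-free), $\ova{C_3}$-free, and contains no induced $P^+(3)$ since any four vertices meeting all four parts induce a $\ova{C_4}$. For $x\in A_1$ one has $N[x]=\{x\}\cup A_2\cup A_4$, and the vertex $a\in A_2\subseteq N^+(x)$ has the in-neighbour $x'\in A_1\setminus\{x\}$ outside $N[x]$ \emph{and} the out-neighbours $A_3$ outside $N[x]$; symmetrically every vertex of $A_4=N^-(x)$ has both an in- and an out-neighbour outside. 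Thus $N[x]$ is not nice, and by vertex-transitivity no closed neighbourhood is.

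The paper fixes this by enlarging $S$ to the second neighbourhood: with $X_2^+=N^+(N^+(x))\setminus N[x]$ and $X_2^-=N^-(N^-(x))\setminus N[x]$, the set $S=\{x\}\cup N^+(x)\cup N^-(x)\cup X_2^+\cup X_2^-$ is nice for \emph{every} $x$, with in-part $\{x\}\cup N^+(x)\cup X_2^+$ and out-part $N^-(x)\cup X_2^-$. The point is that a vertex of $X_2^+$ with an out-neighbour outside $S$ would sit at the end of an induced $P^+(3)$ starting at $x$ (here $\ova{C_3}$-freeness kills the one possible chord). One then checks, again using $P^+(3)$- and $\ova{C_3}$-freeness, that $X_2^+$ and $X_2^-$ are triangle-free and hence $2$-dicolourable by Theorem~\ref{thm:K3P3}, so each part has dichromatic number at most $4$. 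No extremal choice is needed; the difficulty you flagged (``long back-arcs genuinely occur'') is absorbed by going one layer further out rather than by choosing $x$ cleverly.
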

\begin{proof} 
We are going to prove that every graph in $\F(\ovlra{K_2}, K_4, P^+(3), \ova C_3))$ contains a nice set with in-part and out--part of dichromatic number at most $4$. This would imply the claim of this Lemma using Lemma~\ref{lem:niceSet}. 

Let $D \in \F(\ovlra{K_2}, K_4, P^+(3), \ova C_3)$. 
Let $x \in V(D)$. Let $X_1$ (resp.~$X_2$) be the vertices at distance $1$ (resp. at distance $2$) from $x$. Let $X_1^+=N^+(x)$ and let $X_1^-=N^-(x)$ (so $X_1=X_1^+ \cup X_1^-$). Let $X_2^+=X_2 \cap N^+(X_1^+)$ and $X_2^-=X_2 \cap N^-(X_1^-)$. Observe that $X_2^+ \cup X_2^-$ does not need to be equal to $X_2$. 

Let us prove that $S=\{x \} \cup X_1^+ \cup X^-_1 \cup X^+_2 \cup X_2^-$ is a nice set with in-part $\{x\} \cup X_1^+ \cup X^+_2$ and out-part $X_1^- \cup X_2^-$. By definition of $S$, it is clear that $x$ has no neighbor in $V(D) \sm S$, that vertices in $X_1^+$ have no out-neighbor in $V(D) \sm S$ and that vertices in $X_1^-$ have no in-neighbor in $V(D) \sm S$. 
Let $x_2 \in X_2^+$ and let us prove that $x_2$ have no out-neighbor in $V(D) \sm S$. Assume for contradiction that there exists $x_3 \in V(D) \sm S$ such that $x_2 \ra x_3$. By definition of $X_2^+$ there exists a vertex $x_1\in X_1^+$ such that $x \ra x_1 \ra x_2$. 
If $x_3 \ra x_1$, then $x_1x_2x_3$ is a $\ova C_3$ and if $x_1 \ra x_3$, then $x_2 \in X^+_2$, a contradiction in both cases, so $x_1$ and $x_3$ are non-adjacent and thus $x \ra x_1 \ra x_2\ra x_3$ is induced, a contradiction. This proves that vertices in $X^+_2$ have no out-neighbor in $V(D) \sm S$. Similarly (because $P^+(3)$ is invariant under reversing all edges), $X_2^-$ have no in-neighbor in $V(D) \sm S$. This proves that $S$ is a nice set with in and out-part as announced. 

We now prove that $\{x\} \cup X_1^+ \cup X^+_2$ and $X_1^- \cup X_2^-$ are $4$-dicolorable. 
Since $D$ is $K_4$-free, $X_1$ is triangle-free and is thus $2$-dicolorable by Theorem~\ref{thm:K3P3}. 
Assume that $X_2^+$ has a $TT_3$, say $a \ra b \ra c \leftarrow a$. By definition of $X_2^+$, there is a vertex $x_1 \in X_1^+$ such that $x_1 \ra a$. Since $x \ra x_1 \ra a \ra b$ cannot be induced, $x_1$ and $b$ must be adjacent and since $D$ has no $\ova C_3$, $x_1 \ra b$. Since $x \ra x_1 \ra b \ra c$ cannot be induced, $x_1$ is also adjacent with $c$, and thus $\{a,b,c,x_1\}$ induces a $K_4$, a contradiction. Hence $X_2^+$ is  triangle-free and thus $2$-dicolorable. Similarly, $X_2^-$ is $2$-dicolorable. 
We may now use two colors on $ X_1^+$ and distinct set of two colors on  $X_2^+$, then use any of the four colors to color $x$. As $x$ is in no direct 4-cycle induced by $\{x\} \cup X_1^+ \cup X^+_2$, this ia 4-dicoloring of this induced subgraphs. That $X_1^- \cup X_2^-$ is $4$-dicolorable is proved analogously.
\end{proof}

\begin{figure}
\begin{center}

\begin{tikzpicture}[scale=0.9]

\vertex[minimum size=11pt](a) at  (0,0) {$a$}; 
\vertex[minimum size=11pt](b) at  (2,0) {$b$}; 
\vertex[minimum size=11pt](c) at  (4,0) {$c$}; 
\vertex[minimum size=11pt](d) at  (0,2) {$d$}; 
\vertex[minimum size=11pt](e) at  (4,2) {$e$}; 
\vertex[minimum size=11pt](f) at  (0,4) {$f$}; 
\vertex[minimum size=11pt](g) at  (4,4) {$g$};

\draw [->,>=latex] (a) to  (b);  
\draw [->,>=latex] (b) to  (c);  
\draw [->,>=latex] (c) to [bend left=30]  (a);  

\draw [->,>=latex] (b) to  (d);  
\draw [->,>=latex] (d) to  (a);  
\draw [->,>=latex] (c) to  (e);  
\draw [->,>=latex] (e) to  (b);  

\draw [->,>=latex] (f) to  (d);  
\draw [->,>=latex] (d) to  (g);  
\draw [->,>=latex] (g) to  (f);  
\draw [->,>=latex] (f) to  (e);  
\draw [->,>=latex] (e) to  (g);  

\end{tikzpicture}
\caption{The oriented graph $R$. Observe that $R$ is invariant under reversing all its arcs}
\label{figR}
\end{center}
\end{figure}
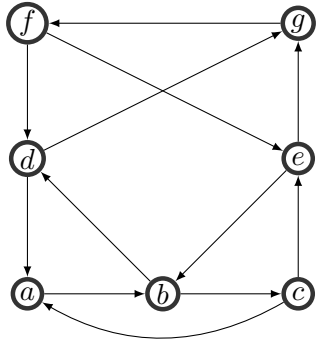

We now need a second technical lemma. 
Let us first define a particular class of oriented graphs named $\mc F$. An oriented graph $F$ belongs to $\mc F$ if there exists $D_F \in \F(\ovlra{K_2}, K_4, P^+(3))$ such that $D_F$ is made of a copy $F'$ of $F$, a stable set of vertices $L$ (disjoint from $V(F')$) such that every vertex in $F'$ has at least one neighbor in $L$, and two more vertices $u$ and $v$ (outside $V(L) \cup V(F')$) such that $u \ra v$, and for every vertex $x$ in $L$ we have $v \ra x \ra u$, and there is no arc between $\{u,v\}$ and $V(F')$.  Observe that $\mc F$ is  hereditary  and is a subclass of $\F(\ovlra{K_2}, K_4, P^+(3))$. 

\begin{lemma}\label{lem:Rfree}
Graphs in $\mc F$ are $R$-free, where $R$ is the graph depicted in Figure~\ref{figR}.
\end{lemma}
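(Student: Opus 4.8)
The plan is to argue by contradiction. Suppose some $F \in \mc F$ contains $R$ as an induced subdigraph, and fix the witnessing digraph $D_F \in \F(\ovlra{K_2}, K_4, P^+(3))$ together with its stable set $L$ and the two vertices $u,v$; I will use the labels $a,\dots,g$ of Figure~\ref{figR} for the induced copy of $R$ inside $F'$. Since $F'$ is induced in $D_F$, this copy is an induced $R$ in $D_F$, and by definition of $\mc F$ each of $a,\dots,g$ has at least one neighbour in $L$. The whole argument rests on a single propagation mechanism coming from the absence of $P^+(3)$. Because $v \ra \ell \ra u$ for every $\ell \in L$, $u \ra v$, and neither $u$ nor $v$ has a neighbour in $V(F')$, I would first record two \emph{extension rules} for any $\ell \in L$ adjacent to a vertex $w$ of the copy of $R$: if $\ell \ra w$ then $\ell$ must be adjacent to every out-neighbour $w'$ of $w$ in $R$ (otherwise $v \ra \ell \ra w \ra w'$ is an induced $P^+(3)$), and symmetrically if $w \ra \ell$ then $\ell$ must be adjacent to every in-neighbour $w''$ of $w$ (otherwise $w'' \ra w \ra \ell \ra u$ is an induced $P^+(3)$).

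Next I would combine these rules with $K_4$-freeness, using the observation that every triangle of $R$ is a \emph{directed} triangle and that $R$ contains exactly the five directed triangles $\{a,b,c\}$, $\{a,b,d\}$, $\{b,c,e\}$, $\{d,f,g\}$ and $\{e,f,g\}$. Hence no $\ell \in L$ can be adjacent to all three vertices of any of them, which turns ``$\ell$ adjacent to two vertices of a triangle'' into ``$\ell$ non-adjacent to the third''. The key point is that these forced non-adjacencies in turn pin down the orientation of the remaining arcs between $\ell$ and the copy of $R$: whenever a new adjacency $\ell \sim w'$ is forced, exactly one of its two orientations would, via the out- (resp.\ in-) neighbour rule, force $\ell$ adjacent to an already-forbidden vertex, so the other orientation must hold, and that orientation forces yet another adjacency. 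Iterating this cascade drives $\ell$ to be adjacent to all three vertices of one of the five directed triangles, producing a $K_4$.

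Concretely, I would run this cascade on the high-degree vertex $b$. Assuming $\ell \ra b$ for some $L$-neighbour $\ell$ of $b$ forces $\ell \sim c$ and $\ell \sim d$, and by $K_4$-freeness $\ell \not\sim a$ and $\ell \not\sim e$; the orientations of $\ell \sim c$ and $\ell \sim d$ are then forced to $c \ra \ell$ and $d \ra \ell$, the latter producing $\ell \sim f$, then $f \ra \ell$ producing $\ell \sim g$, so that $\{\ell,d,f,g\}$ is a $K_4$ — a contradiction. Therefore $b \ra \ell$ holds for the $L$-neighbour of $b$, which forces $\ell \sim a$ and $\ell \sim e$ and (again by $K_4$) $\ell \not\sim c$ and $\ell \not\sim d$. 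Running the same cascade starting from $\ell \sim e$ and examining both orientations then reaches in every branch either a forbidden adjacency (to $c$ or to $d$) or the $K_4$ on $\{e,f,g\}$, completing the contradiction and showing that $R$ cannot be induced in $F'$.

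The main obstacle is purely the bookkeeping of the cascade: one must check at every step that the previously forced non-adjacencies genuinely eliminate one of the two orientations, so that the propagation never stalls and always terminates inside one of the five directed triangles. The reversal-invariance of $R$, together with the fact that reversing all arcs swaps $u$ with $v$ and interchanges the two extension rules, halves this casework, and starting from $b$ keeps each cascade short. I expect the only real care needed is to verify the handful of arc directions used by the rules (the out- and in-neighbourhoods of $b$, $c$, $d$, $e$, $f$, $g$ in $R$) and to confirm that $u$, $v$ and the relevant $R$-vertices are pairwise non-adjacent, so that each invoked $P^+(3)$ is genuinely induced.
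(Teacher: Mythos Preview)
Your proposal is correct and follows essentially the same approach as the paper: pick an $L$-neighbour of $b$ and repeatedly apply the $P^+(3)$-forbidden/$K_4$-forbidden constraints to propagate adjacencies through $R$ until a $K_4$ appears. The only cosmetic differences are that you package the propagation as two explicit ``extension rules'' and treat both orientations of the arc between $\ell$ and $b$, whereas the paper invokes the reversal-invariance of $R$ (and of the $u,v,L$ structure) to assume $x\ra b$ without loss of generality and runs a single cascade terminating in the $K_4$ on $\{x,b,c,e\}$ rather than on $\{\ell,d,f,g\}$.
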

\begin{proof}
Let $F \in \mc F$ and let $D_F$ be the graph described as in the definition of $\mc F$ above. Assume for contradiction that $F$ contains an induced copy of $R$ (with same name for vertices as in Figure~\ref{figR}). By the definition of $D_F$, $b$ has a neighbor $x$ in $L$. Recall that there are two vertices $u$ and $v$ in $D_F$ such that $u \ra v \ra x \ra u$ and such that there is no arc between $\{u,v\}$ and $V(R)$. By reversing all edges of $D_F$, if necessary, we may assume that $x \ra b$ (this is legitimate since,  $R$, $\ova C_3$ and $P^+(3)$ are invariant under reversing all edges).  In the upcoming case analysis, we will be using the fact that there is no arc between $\{u,v\}$ and $V(R)$ without recalling this fact.   

Since $v \ra x \ra b \ra d$ cannot be induced,  $x$ and $d$ are adjacent. Since $D_F$ is $K_4$-free, $x$ and $a$ are non-adjacent. If $x \ra d$, then $v \ra x \ra d \ra a$ is induced, a contradiction. Hence $d \ra x$ holds.

Since $f \ra d \ra x \ra u$ cannot be induced,  $x$ and $f$ must be adjacent. Since $D_F$ is $K_4$-free, $x$ and $g$ are non-adjacent. If $f \ra x$, then $g \ra f \ra x \ra u$ is induced, a contradiction. Hence $x \ra f$ holds. 

Finally, since $v \ra x \ra f \ra e$ cannot be induced,  $x$ and $e$ are adjacent, and since $v \ra x \ra b \ra c$ cannot be induced,  $x$ and $c$ are adjacent. Hence $\{x,b,c,e\}$ induces a $K_4$, a contradiction. 
\end{proof}

\begin{lemma}\label{lem:P3R}
$\dic(\F(K_4, P^+(3), R))\le 66$
\end{lemma}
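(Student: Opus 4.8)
The plan is to apply Lemma~\ref{lem:niceSet}, exactly as in Lemma~\ref{lem:P3C3}: it suffices to exhibit, in every digraph of the class, a nice set whose in-part and out-part have dichromatic number bounded by constants $c_1,c_2$ with $c_1+c_2=66$. (I read the class as consisting of oriented graphs, i.e. with $\ovlra{K_2}$ also forbidden; otherwise $\ovlra{K_n}\in\F(K_4,P^+(3),R)$ would already make it unbounded. This is the setting in which the lemma is applied anyway, since $\mc F\subseteq\F(\ovlra{K_2},K_4,P^+(3),R)$ by Lemma~\ref{lem:Rfree}.) As in Lemma~\ref{lem:P3C3} I would first pass to a strong component of maximum dichromatic number (legitimate since $\dic$ is the maximum of $\dic$ over strong components), fix a vertex $x$, and work with the out-distance layers: set $X_1^{\pm}=N^{\pm}(x)$, $X_2^{+}=X_2\cap N^{+}(X_1^{+})$, $X_2^{-}=X_2\cap N^{-}(X_1^{-})$, and take $S=\{x\}\cup X_1^{+}\cup X_1^{-}\cup X_2^{+}\cup X_2^{-}$ with intended in-part $\{x\}\cup X_1^{+}\cup X_2^{+}$ and out-part $X_1^{-}\cup X_2^{-}$.

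The whole difficulty is that Lemma~\ref{lem:P3C3} used $\ova{C_3}$-freeness in two places that now fail, and both must be recovered from $R$-freeness. First, to see that $S$ is nice one must rule out a vertex $x_3\notin S$ with $x_2\to x_3\to x_1$ where $x\to x_1\to x_2$, $x_1\in X_1^{+}$, $x_2\in X_2^{+}$; in Lemma~\ref{lem:P3C3} such an $x_3$ was excluded because $x_1x_2x_3$ would be a $\ova{C_3}$, but here it is permitted. The fix I would use is to track, for each vertex of $X_2^{+}$, the directed triangles through it that straddle the boundary, and to show — by a forced-arc analysis in the style of Lemma~\ref{lem:Rfree}, following the parent vertices in $X_1^{+}$ and the successive triangles they spawn — that such boundary-crossing triangles cannot be chained without producing a $K_4$, a $P^{+}(3)$, or the graph $R$. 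This confines the offending vertices to one further layer, which can then be folded into $S$ (or into the in-/out-parts) while keeping it nice; the symmetric argument, valid because $R$ and $P^{+}(3)$ are invariant under reversing all arcs, handles $X_2^{-}$.

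Second, one must bound $\dic(X_2^{+})$ and $\dic(X_2^{-})$. In Lemma~\ref{lem:P3C3} these were triangle-free (hence $2$-dicolourable by Theorem~\ref{thm:K3P3}); now they may contain directed triangles. The key leverage is that $X_2^{+}$ is out-dominated by the triangle-free set $X_1^{+}$: every vertex of $X_2^{+}$ has an in-neighbour in $X_1^{+}$, and $X_1^{+}$ is triangle-free because a triangle there together with $x$ would be a $K_4$. Using $R$-freeness together with this domination — again mimicking the case analysis of Lemma~\ref{lem:Rfree}, with a dominating neighbour in $X_1^{+}$ playing the role of the vertex of $L$ that lies in a $\ova{C_3}$ with $u,v$ — one shows that the directed triangles of $X_2^{+}$ are so constrained that $X_2^{+}$ breaks into a bounded number of pieces each lying in $\F(\ovlra{K_2},K_4,P^{+}(3),\ova{C_3})$, to which Lemma~\ref{lem:P3C3} applies with bound $8$ per piece. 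Together with the $2$ colours for $X_1^{+}$ and a free colour for $x$ (which has no in-neighbour inside the in-part, hence lies in no directed cycle there), this bounds the in-part, and the symmetric estimate bounds the out-part; summing gives $66$.

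I expect the main obstacle to be precisely this triangle control: the boundary-crossing triangles of the first step and the internal triangles of $X_2^{+}$ in the second. Under $\ova{C_3}$-freeness Lemma~\ref{lem:P3C3} disposed of each configuration with a one-line contradiction, whereas here every candidate triangle must be run through a multi-case argument against $R$, $K_4$ and $P^{+}(3)$ as in Lemma~\ref{lem:Rfree}. Making this bookkeeping close, and optimising the resulting constant down to $66$, is the tedious heart of the proof; by contrast the Lemma~\ref{lem:niceSet} reduction, the reduction to a strong component, and the two-layer BFS set-up are routine adaptations of Lemma~\ref{lem:P3C3}.
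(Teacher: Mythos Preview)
Your plan follows the single-vertex BFS template of Lemma~\ref{lem:P3C3}, and you correctly identify that without $\ova{C_3}$-freeness both the niceness of $S$ and the bound on $\dic(X_2^{\pm})$ break. But the fixes you sketch are speculative: ``fold in one more layer'' has no evident reason to terminate (the boundary $\ova{C_3}$'s can cascade outward indefinitely), and ``break $X_2^{+}$ into boundedly many $\ova{C_3}$-free pieces'' is the conclusion you want, not a method to reach it. As written, the central step is missing.

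The paper's route is structurally different and bypasses both obstacles. Instead of anchoring the nice set at a single vertex, it anchors it at a directed triangle $C=u\to v\to w\to u$ (if none exists, Lemma~\ref{lem:P3C3} already gives $\dic\le 8$). Around $C$ one sets $X_1^{uv}=\{x\in N(C):v\to x\to u\}$ and symmetrically $X_1^{vw},X_1^{wu}$; then $X_1=N(C)\sm(X_1^{uv}\cup X_1^{vw}\cup X_1^{wu})$, and $X_2^{uv}$ is the set of vertices outside $C\cup N(C)$ with a neighbour in $X_1^{uv}$ (and symmetrically). The set $S=\{u,v,w\}\cup N(C)\cup X_2^{uv}\cup X_2^{vw}\cup X_2^{wu}$ is shown to be nice \emph{using only $K_4$-freeness and $P^{+}(3)$-freeness}: because two consecutive vertices of $C$ sit behind any $x_1\in N(C)$, the $P^{+}(3)$ argument closes with no appeal to $R$. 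This is the idea your proposal lacks, and it removes your first obstacle entirely.

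The $R$-freeness is then spent in one clean place: showing each $X_2^{uv}$ is $\ova{C_3}$-free. If $a\to b\to c\to a$ were a $\ova{C_3}$ in $X_2^{uv}$, a short claim shows that any $x_1\in X_1^{uv}$ adjacent to $\{a,b,c\}$ forms a $\ova{C_3}$ with exactly two of them; picking two such $x_1,x_1'$ covering all three, the seven vertices $u,v,x_1,x_1',a,b,c$ induce $R$. Hence $X_2^{uv}\in\F(\ovlra{K_2},K_4,P^{+}(3),\ova{C_3})$ and is $8$-dicolorable by Lemma~\ref{lem:P3C3}. With $\dic(C\cup N(C))\le 9$ (each of $N(u),N(v),N(w)$ is triangle-free, hence $2$-dicolorable by Theorem~\ref{thm:K3P3}), this gives $\dic(S)\le 9+3\cdot 8=33$, and Lemma~\ref{lem:niceSet} yields $66$.
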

\begin{proof}
We are going to prove that every graph in $\F(K_4, P^+(3), R)$ contains a nice set   of dichromatic number at most $33$, which, by Lemma~\ref{lem:niceSet}, implies our claim. 

Let $D \in \F(K_4, P^+(3), R)$. If $D$ is $\ova C_3$-free, we are done by Theorem~\ref{thm:K3P3}.
So we may assume that $D$ contains $C= u \ra v \ra w \ra u$. 
Let $X^{uv}_1$ (resp. $X^{vw}_1$, resp. $X^{wu}_1$) be the set of vertices $x \in N(C)$  such that $v \ra x \ra u$ 
(resp. such that $w \ra x \ra v$, resp. $u \ra x \ra w$). Let $X_1=N(C) \sm (X^{uv}_1 \cup X^{vw}_1 \cup X^{wu}_1)$. 
Let $X_2^{uv}$ (resp. $X_2^{vw}$, resp. $X_2^{wv}$) be the set of vertices in $V(D) \sm (V(C) \cup N(C))$  having at least one neighbor in $X_1^{uv}$ (resp. in $X_1^{vw}$, resp. in $X_1^{wv}$). 

Let us prove that the set $S=\{u,v,w\} \cup X^{uv}_1 \cup X^{vw}_1 \cup X^{wu}_1 \cup X_1 \cup X^{uv}_2 \cup X^{vw}_2 \cup X^{wu}_2$ is a nice set. We say that a vertex is \emph{nice} if it has no out-neighbor or no in-neighbor in $V(D) \sm S$. First observe that the neighborhood of a vertex in $\{u,v,w\} \cup X^{uv}_1 \cup X^{vw}_1 \cup X^{wu}_1$ is included in $S$, and thus every  vertex in  $\{u,v,w\} \cup X^{uv}_1 \cup X^{vw}_1 \cup X^{wu}_1$ is nice.

Let $x_1 \in X_1$ and let us prove that $x_1$ is nice. We are in one of the three following situations:
\begin{itemize}
\item $x_1$ has only in-neighbors in $C$. Assume with out loss of generality that $u \ra x_1$. We claim that $x_1$ has no out-neighbor in $V(D) \sm S$. Assume for contradiction that there exists $x_2 \in V(D) \sm S$ such that $x_1 \ra x_2$. By construction of $S$, $x_2 \notin N(C)$. Since $w \ra u \ra x_1 \ra x_2$ cannot be induced, $w$ and $x_1$ must be adjacent and thus $w \ra x_1$. Since $v \ra w \ra x_1 \ra x_2$ cannot be induced, $v$ and $x_1$ must be adjacent, but then $\{u,v,w, x_1\}$ induces a $K_4$, a contradiction. This proves the announced claim. 
\item $x_1$ has only out-neighbors in $C$. In this case,  $x_1$ has no in-neighbor in $V(D) \sm S$, we skip the proof that is similar to the one of the previous case. 
\item $x_1$ has both an in-neighbor and an out-neighbor in $C$, and is not forming a $\ova C_3$ with arcs of $C$. We may assume, without loss of generality, that $u \ra x_1 \ra v$, and observe that in this case $w$ and $x$ are not adjacent because the underlying graph of $D$ is $K_4$-free.
We claim that $x_1$ has no out-neighbor in $V(D) \sm S$ (it has actually no neighbor at all in $V(D) \sm S$). Assume for contradiction that there exists $x_2$ in $V(D) \sm S$ such that $x_1\ra x_2$. By construction of $S$, $x_2 \notin V(C) \cup N(C)$, and thus $w \ra u \ra x_1 \ra x_2$ is induced, a contradiction. 
\end{itemize}
This proves that every vertex in $X_1$ is nice. 

Let us now prove that every vertex in $X_2^{uv}$ is nice. Let $x_2 \in X_2^{uv}$. By definition of $X_2^{uv}$, there is a vertex $x_1 \in X_1^{uv}$ such that $x_1$ and $x_2$ are adjacent. Observe that $x_1$ and $w$ are non-adjacent (because $D$ is $K_4$-free). Let $x_3$ be a neighbor of $x_2$ in $V(D) \sm S$. Observe that by the definition of $S$, $x_1$ and $x_3$ are non-adjacent. 
Hence, if $x_1 \ra x_2$, then $x_2 \ra x_3$ cannot hold, as otherwise $v \ra x_1 \ra x_2 \ra x_3$ is induced, and if $x_2 \ra x_1$, then $x_3 \ra x_2$ cannot hold, otherwise $x_3 \ra x_2 \ra x_1 \ra u$ is induced. 
This proves that $x_2$ is nice. The situation in $X_2^{vw}$ and $X_2^{wu}$ being exactly the same, every vertex in $X_2^{vw} \cup X_2^{wu}$ is also nice, and thus  $S$ is a nice set. 
\medskip

It now remains to prove that $\dic(S) \le 33$. Observe that $D$ being $K_4$-free, the neighborhood of a vertex of $D$ is $K_3$-free and thus $2$-dicolorable by Theorem~\ref{thm:K3P3}.  Hence $C \cup N(C)=\{u,v,w\} \cup X^{uv}_1 \cup X^{vw}_1 \cup X^{wu}_1 \cup X_1$ is $9$-dicolorable.  
It is thus enough to show that $X_2^{uv} \cup X_2^{vw} \cup X_2^{wu}$ is $24$-dicolorable, and by symmetry between $X_2^{uv}$,  $X_2^{vw}$  and $X_2^{wu}$,  it is enough to show that $X_2^{uv}$ is $8$-dicolorable.  
Finally, by Lemma~\ref{lem:P3C3}, it is enough to show that $X_2^{uv}$ is $\ova C_3$-free. 
Assume for contradiction that $X^{uv}_2$ contains a $\ova C_3$, say $a \ra b \ra c \ra a$. 
Let us prove a technical claim:
\medskip

\noindent{\bf Claim:} {\em
 Let $x_1 \in X_1^{uv}$ such that $x_1$ has a neighbor in $\{a,b,c\}$. Then $x_1$ has exactly two neighbors in $\{a,b,c\}$ and forms a $\ova C_3$ with these two neighbors.}
\medskip

\noindent{\em Proof of Claim:} 
Assume without loss of generality that $x_1$ and $a$ are adjacent. Since all the reasoning will be on $D[\{u,v,x_1,a,b,c\}]$ which is invariant under reversing all edges, and what we want to prove is also invariant under reversing all edges,  we can assume without loss of generality that $x_1 \ra a$. Since $v \ra x_1 \ra a \ra b$ cannot be induced, $x_1$ and $b$ are adjacent. Since $D$ is $K_4$-free, $x_1$ and $c$ are non-adjacent. If $x_1 \ra b$, then $v \ra x_1 \ra b \ra c$ is induced, a contradiction. Hence $b \ra x_1$. 
This completes the proof of Claim.
\medskip

By definition of $X^{uv}_2$, there exists $x_1 \in X_1^{uv}$ such that $x_1$ and $a$ are adjacent. By \emph{Claim}, we can assume without loss of generality that $b \ra x_1 \ra a$ and that $x_1$ and $c$ are non-adjacent. Hence, there exists a vertex $x_1'$ in $X_1^{uv} \sm \{x_1\}$ such that $x'_1$ and $c$ are adjacent. Observe that $x_1$ and $x'_1$ are non-adjacent, otherwise $D[\{u,v,x_1,x'_1\}]$ is a $K_4$, a contradiction.  By \emph{Claim}, we either have $c \ra x'_1 \ra b$ and $x_1'$ and $a$ are non-adjacent,  or $a\ra x'_1 \ra c$ and $x'_1$ and $b$ are non-adjacent. In both cases $D[\{u,v,x_1,x'_1,a,b,c\}]$ induces $R$, a contradiction. Hence, $X^{uv}_2$ is $\ova C_3$-free.
\end{proof}

\begin{theorem}\label{thm:K4P3}
$\dic(\F(K_4, P^+(3))) \le 414$.
\end{theorem}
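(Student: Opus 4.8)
The plan is to apply Lemma~\ref{lem:niceSet} to the hereditary class $\F(K_4, P^+(3))$, reusing verbatim the layer construction from the proof of Lemma~\ref{lem:P3R}, but now feeding the second layer into Lemmas~\ref{lem:Rfree} and~\ref{lem:P3R} rather than into the $\ova{C_3}$-free bound. Concretely, I would show that every $D \in \F(K_4, P^+(3))$ contains a nice set $S$ with $\dic(S) \le 207$; the ``in particular'' clause of Lemma~\ref{lem:niceSet} then yields $\dic(D) \le 2\cdot 207 = 414$. The easy case is when $D$ is $\ova{C_3}$-free: then $\dic(D)\le 8$ by Lemma~\ref{lem:P3C3}, and one may simply take $S=V(D)$ as the (trivially) nice set, whose dichromatic number $\le 8$ is far below $207$.

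So I would assume $D$ contains a directed triangle $C = u \ra v \ra w \ra u$ and build exactly the sets $X_1^{uv}, X_1^{vw}, X_1^{wu}, X_1, X_2^{uv}, X_2^{vw}, X_2^{wu}$ from the proof of Lemma~\ref{lem:P3R}, together with $S = \{u,v,w\} \cup X^{uv}_1 \cup X^{vw}_1 \cup X^{wu}_1 \cup X_1 \cup X^{uv}_2 \cup X^{vw}_2 \cup X^{wu}_2$. The verification that $S$ is a nice set carries over unchanged, since that part of the proof of Lemma~\ref{lem:P3R} uses only that $D$ is $K_4$-free and $P^+(3)$-free together with the triangle $C$, and never invokes $R$-freeness. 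For the dichromatic bound, $D$ being $K_4$-free forces each open neighborhood to be $K_3$-free, hence $2$-dicolorable by Theorem~\ref{thm:K3P3}, so $C \cup N(C)=\{u,v,w\} \cup X^{uv}_1 \cup X^{vw}_1 \cup X^{wu}_1 \cup X_1$ is $9$-dicolorable exactly as in Lemma~\ref{lem:P3R}.

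The key new step — and the crux of the whole argument — is to observe that each of $X_2^{uv}$, $X_2^{vw}$, $X_2^{wu}$, viewed as an induced subdigraph, lies in the class $\mc F$. For $X_2^{uv}$ I would exhibit the required structure by taking the intermediate layer to be $L = X_1^{uv}$: this $L$ is a stable set because any arc inside it would complete a $K_4$ together with $u$ and $v$; every vertex of $X_2^{uv}$ has a neighbor in $L$ by the definition of $X_2^{uv}$; the two vertices $u,v$ satisfy $u \ra v$ and $v \ra x \ra u$ for all $x \in L$; and there is no arc between $\{u,v\}$ and $X_2^{uv}$ since $X_2^{uv}$ lies outside $V(C)\cup N(C)$. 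Hence $X_2^{uv} \in \mc F$, so Lemma~\ref{lem:Rfree} makes it $R$-free, and being also $K_4$-free and $P^+(3)$-free it belongs to $\F(K_4, P^+(3), R)$; Lemma~\ref{lem:P3R} then gives $\dic(X_2^{uv}) \le 66$. The same reasoning applies to $X_2^{vw}$ and $X_2^{wu}$, so their union is $198$-dicolorable, whence $\dic(S) \le 9 + 198 = 207$, and Lemma~\ref{lem:niceSet} delivers the bound $414$.

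I expect the main obstacle to be precisely this structural verification that the second layer belongs to $\mc F$: all four defining conditions (stability of $L=X_1^{uv}$, coverage of $X_2^{uv}$ by $L$, the $u\ra v$ configuration with $v\ra x\ra u$, and the absence of arcs from $\{u,v\}$ to the second layer) must be checked to hold simultaneously inside $D$. This is exactly what allows the weaker $R$-free conclusion of Lemma~\ref{lem:Rfree} to substitute for the $\ova{C_3}$-free reasoning used internally in Lemma~\ref{lem:P3R}, at the cost of raising the per-layer bound from $8$ to $66$; the remainder is routine accounting of dichromatic numbers.
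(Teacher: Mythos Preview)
Your proposal is correct and follows essentially the same approach as the paper's own proof: the same nice set $S$ built around a directed triangle, the same $9$-dicolorability of $C\cup N(C)$, and the same reduction of each $X_2^{uv}$ to the class $\mc F$ so that Lemmas~\ref{lem:Rfree} and~\ref{lem:P3R} yield the bound $66$. In fact you spell out the verification that $D[X_2^{uv}]\in\mc F$ (stability of $X_1^{uv}$, coverage, the $u\ra v$ and $v\ra x\ra u$ arcs, and non-adjacency of $\{u,v\}$ with the second layer) in more detail than the paper, which simply asserts it ``by the construction of $X_2^{uv}$''.
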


\begin{proof}
By Lemma~\ref{lem:niceSet} it would be enough to prove that $\F(K_4, P^+(3))$ contains a nice set   of dichromatic number at most $207$.

Let $D \in  \F(K_4, P^+(3)))$. If $D$ is $\ova C_3$-free, we are done by Lemma~\ref{lem:P3C3}.
So we may assume that $D$ contains $C= u \ra v \ra w \ra u$. 

Define $S=\{u,v,w\} \cup X^{uv}_1 \cup X^{vw}_1 \cup X^{wu}_1 \cup X_1 \cup X^{uv}_2 \cup X^{vw}_2 \cup X^{wu}_2$ exactly as in the proof of Lemma~\ref{lem:P3C3}. 
As in the proof of Lemma~\ref{lem:P3R}, $S$ is a nice set and  $C \cup N(C)=\{u,v,w\} \cup X^{uv}_1 \cup X^{vw}_1 \cup X^{wu}_1 \cup X_1$ is $9$-dicolorable. 
It is thus enough to show that $X_2^{uv} \cup X_2^{vw} \cup X_2^{wu}$ is $198$-dicolorable, and by symmetry between $X_2^{uv}$,  $X_2^{vw}$  and $X_2^{wu}$,  it is enough to show that $X_2^{uv}$ is $66$-dicolorable.  
By the construction of $X_2^{uv}$, $D[X_2^{uv}]$ is in $\mc F$, hence by Lemma~\ref{lem:Rfree}, it is $R$-free and thus $66$-colorable by Lemma~\ref{lem:P3R}.\end{proof}

{\bf Acknowledgment} This project was financed by the ANR projects DISTANCIA (ANR-17-CE40-0015),  HOSIGRA (ANR-17-CE40-0022) and ALGORIDAM (ANR-19-CE48-0016).
We would also like to thanks Maria Abi Aad  and Mekkia Kouider  for participation in discussions at the early stages of the project.

\end{document}